 \let\doendproof\endproof
\renewcommand\endproof{~\hfill\qed\doendproof}
\title{Drawing graphs with vertices and edges in convex position}
\author{Ignacio Garc\'ia-Marco \and Kolja Knauer}
\author{Ignacio Garc\'ia-Marco\inst{1}\fnmsep\thanks{supported by ANR project CompA (project number: ANR-13-BS02-0001-01)} \and Kolja Knauer\inst{2}\fnmsep\thanks{supported by ANR EGOS grant ANR-12-JS02-002-01 and PEPS grant EROS}}
\institute{LIP, ENS Lyon - CNRS - UCBL - INRIA, Universit\'e de Lyon
UMR 5668, Lyon, France
\\\email{ignacio.garcia-marco@ens-lyon.fr} \and Aix-Marseille
Universit\'e, CNRS, LIF UMR 7279, Marseille,
France\\\email{kolja.knauer@lif.univ-mrs.fr}}
\begin{document}

\maketitle

\begin{abstract}
A graph has strong convex dimension $2$ if it admits a
straight-line drawing in the plane such that its vertices form a convex set 
and the midpoints of its edges also constitute a convex
set. Halman, Onn, and Rothblum conjectured that graphs of
strong convex dimension $2$ are planar and therefore have at most
$3n-6$ edges. We prove that all such graphs have indeed at most $2n-3$
edges, while on the other hand we present an infinite family of non-planar
graphs of strong convex dimension $2$. We give lower bounds on
the maximum number of edges a graph of strong convex dimension $2$
can have and discuss several natural variants of this graph class. Furthermore, we apply our
methods to obtain new results about large convex sets in
Minkowski sums of planar point sets -- a topic that has been of interest in
recent years.
\end{abstract}

\section{Introduction}

A point set $X\subseteq\mathbb{R}^2$ is \emph{(strictly) convex}
if every point in $X$ is a vertex of the convex hull of $X$. A
point set $X$ is said to be \emph{weakly convex} if $X$
lies on the boundary of its convex hull. A \emph{drawing} of a graph
$G$ is a mapping~$f:V(G)\to\mathbb{R}^2$ such that edges
are straight line segments connecting vertices and neither midpoints
of edges, nor vertices, nor midpoints and vertices coincide. Through
most of the paper we will not distinguish between (the elements of)
a graph and their drawings.

For $i,j\in\{s,w,a\}$ we define $\mathcal{G}_i^j$ as the class of
graphs
admitting a drawing such that the set of vertices is $\begin{cases} \mbox{strictly convex} &\mbox{if } i=s \\
\mbox{weakly convex} & \mbox{if } i=w \\
\mbox{arbitrary} & \mbox{if } i=a \end{cases}$ \ and the midpoints of edges constitute a $\begin{cases} \mbox{strictly convex} &\mbox{if } j=s \\
\mbox{weakly convex} & \mbox{if } j=w \\
\mbox{arbitrary} & \mbox{if } j=a \end{cases}$\  set. Further,
we define~$g_i^j(n)$ to be the maximum number of edges an $n$-vertex
graph in $\mathcal{G}_i^j$ can~have.

Clearly, all $\mathcal{G}_i^j$ are closed under taking subgraphs and $\mathcal{G}_s^a=\mathcal{G}_w^a=\mathcal{G}_a^a$ is the class of all graphs.

\subsubsection*{Previous results and related problems:}
Motivated by a special class of convex optimization
problems~\cite{Onn-04}, Halman, Onn, and Rothblum~\cite{Hal-07}
studied drawings of graphs in $\mathbb{R}^d$ with similar
constraints as described above. In particular, in their language a
graph has convex dimension $2$ if and only if it is in
$\mathcal{G}_a^s$ and strong convex dimension $2$ if and only if it
is in $\mathcal{G}_s^s$. They show that all trees and cycles are in
$\mathcal{G}_s^s$, while
$K_4\in\mathcal{G}_a^s\setminus\mathcal{G}_s^s$ and
$K_{2,3}\notin\mathcal{G}_a^s$. Moreover, they show that~$n\leq
g_s^s(n)\leq 5n-8$. Finally, they conjecture that all graphs in
$\mathcal{G}_s^s$ are planar and thus $g_s^s(n)\leq 3n-6$.

The problem of computing or bounding $g_a^s(n)$ and $g_s^s(n)$ was rephrased and
generalized in the setting of convex subsets of
Minkowski sums of planar point sets by Eisenbrand et
al.~\cite{UpBound2008} and then regarded as a problem of
computational geometry in its own right. We introduce this
setting and give an overview of known results before explaining
its relation to the original graph drawing problem.

Given two point sets $A, B\subseteq \mathbb{R}^d$ their
\emph{Minkowski sum} $A+B$ is defined as~$\{a+b\mid a\in A, b\in
B\} \subseteq \mathbb{R}^d$. We define $M(m,n)$ as the largest cardinality of a convex set $X \subseteq A + B$, for $A$ and $B$ planar point
sets with $|A|=m$ and~$|B|=n$. In~\cite{UpBound2008} it was shown
that $M(m,n)\in O(m^{2/3}n^{2/3}+m+n)$. This upper bound was complemented by B\'{\i}lka et
al.~\cite{LowBound2010} with
an asymptotically matching lower bound, even under the assumption that $A$ itself is
convex, i.e., $M(m,n)\in \Theta(m^{2/3}n^{2/3}+m+n)$.
Notably, the lower bound works also for the case $A=B$ non-convex, as shown by
Swanepoel and Valtr~\cite[Proposition 4]{SV2010}. In~\cite{Tiw-14} Tiwary gives an
upper bound of~$O((m+n)\log(m+n))$ for the largest cardinality of a convex
set~$X \subseteq A + B$, for $A$ and $B$ planar
convex point sets with~$|A|=m$ and~$|B|=n$. Determining the
asymptotics in this case remains an open question.

As first observed in~\cite{UpBound2008}, the graph drawing problem of Halman et al. is related to the largest
cardinality of a convex set $X \subset A + A$, for $A$
some planar point set. In fact, from $X$ and $A$ one can deduce a
graph $G\in\mathcal{G}_a^s$ on vertex set $A$, with an edge $aa'$
for all $a\neq a'$ with $a+a'\in X$. The midpoint of the edge $aa'$
then just is $\frac{1}{2}(a+a')\in \frac{1}{2}X\subset \frac{1}{2}A + \frac{1}{2}A$. Conversely, from any
$G\in\mathcal{G}_a^s$ one can construct $X$ and $A$ as desired. The
only trade-off in this translation are the pairs of the form $aa$,
which are not taken into account by the graph-model, because they
correspond to vertices. Hence, they do not play a role from the
purely asymptotic point of view. Thus, the results
of~\cite{UpBound2008,LowBound2010,SV2010} yield $g_a^s(n) =
\Theta(n^{4/3})$. Conversely, the bounds for $g_s^s(n)$ obtained
in~\cite{Hal-07} give that the largest cardinality of a convex set~$X \subseteq A + A$, for $A$ a planar convex point
set with $|A|=n$ is in $\Theta(n)$.

%

\subsubsection*{Our results:}
In this paper we study the set of graph classes defined in the
introduction. We extend the list of properties of point sets
considered in earlier works with \emph{weak} convexity. We
completely determine the inclusion relations on the resulting
classes. We prove that $\mathcal{G}_s^s$ contains non-planar graphs,
which disproves a conjecture of Halman et al.~\cite{Hal-07}, and
that $\mathcal{G}_s^w$ contains cubic graphs, while we believe 
is false for $\mathcal{G}_s^s$. We give new bounds for the
parameters~$g_i^j(n)$: we show that $g_s^w(n)=2n-3$, which is an
upper bound for $g_s^s(n)$ and therefore improves the upper bound of
$3n-6$ conjectured by Halman et al.~\cite{Hal-07}. Furthermore we
show that $\lfloor\frac{3}{2}(n-1)\rfloor\leq g_s^s(n)$.

For the relation with Minkowski sums we show that the largest
cardinality of a weakly convex set $X \subseteq A +
A$, for $A$ some convex planar point set of $|A|=n$, is $2n$ and of a
strictly convex set is between $\frac{3}{2}n$ and $2n-2$. 

The results for weak convexity are the first non-trivial precise formulas in this area.

\bigskip

A preliminary version of this paper has been published in conference proceedings\cite{Gar-15}.

\section{Graph drawings}

Given a graph $G$ drawn in the plane with straight line segments as
edges, we denote by $P_V$ the convex hull of its set of vertices and by
$P_E$ the convex hull of the set of midpoints of its edges. Clearly, unless $V=\emptyset$, $P_E$
is strictly contained in $P_V$.

\subsection{Inclusions of classes}
We show that most of the classes defined in the introduction coincide and determine the exact set of inclusions among the remaining classes.

\begin{theorem}\label{thm:clases}
We have
$\mathcal{G}_s^s=\mathcal{G}_w^s\subsetneq\mathcal{G}_s^w\subsetneq\mathcal{G}_w^w=\mathcal{G}_a^w=\mathcal{G}_s^a=\mathcal{G}_w^a=\mathcal{G}_a^a$
and $\mathcal{G}_s^s\subsetneq\mathcal{G}_a^s \subsetneq
\mathcal{G}_w^w$. Moreover, there is no inclusion relationship
between $\mathcal{G}_a^s$ and $\mathcal{G}_s^w$. See
Figure~\ref{fig:clases} for an illustration.\end{theorem}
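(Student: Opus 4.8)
I would organize the proof around three mechanisms: monotonicity, a collapse-to-a-line argument, a perturbation argument, and finally two explicit separating examples ($K_4$ and $K_{2,3}$).

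\paragraph{Easy inclusions and the top class.} First I would record the trivial monotonicity inclusions: since a strictly convex set is weakly convex and a weakly convex set is arbitrary, fixing one coordinate gives $\mathcal{G}_s^j\subseteq\mathcal{G}_w^j\subseteq\mathcal{G}_a^j$ and $\mathcal{G}_i^s\subseteq\mathcal{G}_i^w\subseteq\mathcal{G}_i^a$. These yield every containment in the statement that is not strict, \emph{except} that I still must show the largest class collapses. For this I would place all vertices generically on a single line: the vertices are then weakly convex (they lie on the segment that is their convex hull), all edge-midpoints lie on the same line and hence are also weakly convex, and a generic choice of the collinear positions avoids all three forbidden coincidences. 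Thus every graph lies in $\mathcal{G}_w^w$, so together with $\mathcal{G}_s^a=\mathcal{G}_w^a=\mathcal{G}_a^a$ (all graphs, via a placement on a circle) the whole block $\mathcal{G}_w^w=\mathcal{G}_a^w=\mathcal{G}_s^a=\mathcal{G}_w^a=\mathcal{G}_a^a$ is the class of all graphs.

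\paragraph{The equality $\mathcal{G}_s^s=\mathcal{G}_w^s$.} Here only ``$\supseteq$'' needs work. Given a drawing with weakly convex vertices and strictly convex midpoints, I would exploit that strict convexity of a finite point set, non-coincidence of the special points, and non-degeneracy are all \emph{open} conditions on the vertex positions, while the midpoints depend continuously (affinely) on them. Hence there is an $\varepsilon>0$ such that every $\varepsilon$-perturbation of the vertices keeps the midpoints strictly convex and the drawing valid. Within that $\varepsilon$-ball I would push each vertex lying in the relative interior of an edge of $P_V$ slightly outward, which turns the weakly convex vertex set into a strictly convex one; this produces the desired witness for $\mathcal{G}_s^s$.

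\paragraph{Strictness and incomparability.} All remaining strict inclusions and the incomparability reduce to two graphs. For $K_4$ I would show $K_4\notin\mathcal{G}_s^w$: its vertices must form a convex quadrilateral, and a short computation shows that the two diagonal-midpoints lie \emph{strictly inside} the Varignon parallelogram spanned by the four side-midpoints (writing the diagonal-midpoints in the basis given by the diagonals, their coordinates are controlled by the position of the diagonal intersection point, forcing them into the interior). Hence the six midpoints are never weakly convex. Since $K_4\in\mathcal{G}_a^s$ (introduction), this gives $\mathcal{G}_a^s\not\subseteq\mathcal{G}_s^w$ and, as $K_4\in\mathcal{G}_w^w$, also $\mathcal{G}_s^w\subsetneq\mathcal{G}_w^w$; together with $K_4\in\mathcal{G}_a^s\setminus\mathcal{G}_s^s$ and $K_{2,3}\in\mathcal{G}_w^w\setminus\mathcal{G}_a^s$ this settles $\mathcal{G}_s^s\subsetneq\mathcal{G}_a^s\subsetneq\mathcal{G}_w^w$. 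For the opposite direction I would exhibit an explicit drawing showing $K_{2,3}\in\mathcal{G}_s^w$: with parts $\{a_1,a_2\}$ and $\{b_1,b_2,b_3\}$, the six midpoints form two translates of $\tfrac12\{b_1,b_2,b_3\}$ by $\tfrac12 a_1$ and $\tfrac12 a_2$, and choosing a symmetric convex pentagon (the $b_i$ as a cap over the segment $a_1a_2$) makes these two translated triangles sit on the boundary of a trapezoid, hence weakly convex. Since $K_{2,3}\notin\mathcal{G}_a^s$, this single example gives both $\mathcal{G}_s^w\not\subseteq\mathcal{G}_a^s$ and (because $\mathcal{G}_s^s\subseteq\mathcal{G}_a^s$) the strictness $\mathcal{G}_w^s=\mathcal{G}_s^s\subsetneq\mathcal{G}_s^w$.

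The routine parts are the monotonicity and the collinear and circle placements. The step I expect to carry the real content is pinning down the two separating examples: proving the Varignon obstruction that keeps $K_4$ out of $\mathcal{G}_s^w$, and, conversely, finding and verifying the explicit strictly convex placement of $K_{2,3}$ whose midpoints are weakly (but not strictly) convex. The perturbation argument for $\mathcal{G}_s^s=\mathcal{G}_w^s$ is clean provided one is careful that the outward push can be taken smaller than the openness radius guaranteeing strict convexity of the midpoints.
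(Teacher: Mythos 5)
Your proposal is correct and covers every claim of the theorem, but it follows a genuinely different route than the paper at the key separation steps. The paper proves $\mathcal{G}_s^s\subsetneq\mathcal{G}_s^w$ by a self-contained analysis of $K_4-e$ (it lies in $\mathcal{G}_s^w$ but not in $\mathcal{G}_s^s$), and it gets $\mathcal{G}_s^w\subsetneq\mathcal{G}_a^a$, $\mathcal{G}_a^s\not\subseteq\mathcal{G}_s^w$, and the strictness of $\mathcal{G}_s^s\subsetneq\mathcal{G}_a^s\subsetneq\mathcal{G}_w^w$ from a forward reference to the bound $g_s^w(n)\leq 2n-3$ (Theorem~\ref{2n-2}) combined with the cited asymptotics $g_a^s(n)=\Theta(n^{4/3})$. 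You instead keep everything elementary and internal to the theorem: your Varignon computation, showing that in any strictly convex quadrilateral the two diagonal midpoints lie strictly inside the parallelogram of the four side midpoints, proves $K_4\notin\mathcal{G}_s^w$ outright, and this one fact simultaneously yields $\mathcal{G}_s^w\subsetneq\mathcal{G}_w^w$, $\mathcal{G}_a^s\not\subseteq\mathcal{G}_s^w$ (with the cited $K_4\in\mathcal{G}_a^s$), and $\mathcal{G}_s^s\subsetneq\mathcal{G}_a^s$ --- no appeal to the $2n-3$ theorem or to the nontrivial $\Theta(n^{4/3})$ results is needed. (Amusingly, the paper itself asserts this Varignon fact with a ``clearly'' inside its $K_4-e$ case analysis.) Similarly, you replace the paper's $K_4-e$ argument for $\mathcal{G}_s^s\subsetneq\mathcal{G}_s^w$ by $K_{2,3}$, which does double duty since $K_{2,3}\notin\mathcal{G}_a^s$ is already cited from Halman et al.; your trapezoid drawing of $K_{2,3}$ and your generic collinear drawing for $\mathcal{G}_w^w$ differ from the paper's explicit coordinates but are valid under the paper's definitions (collinear sets are weakly convex, and drawings with overlapping edges are allowed --- the paper's own $K_n$ drawing has them). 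The trade-off: your proof is self-contained and avoids heavy machinery, while the paper's choices are not redundant in context, since $K_4-e\notin\mathcal{G}_s^s$ is reused later to conclude $L_n\notin\mathcal{G}_s^s$, and the quantitative route exhibits the separations via edge counts.

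One step you should tighten is the perturbation proof of $\mathcal{G}_w^s\subseteq\mathcal{G}_s^s$: pushing the vertices interior to a hull edge ``slightly outward'' does not by itself give strict convexity (equal pushes keep them collinear, and uncoordinated pushes can leave three of them collinear or one inside the hull of the others). They must be lifted onto a strictly convex arc, as the paper does with the parabola $b=\epsilon a(1-a)$, with $\epsilon$ chosen below both your openness radius and the slopes of the two adjacent hull edges. This is a routine repair of a detail, not a flaw in the idea.
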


 \begin{figure}[htb]
  \centering
  \includegraphics{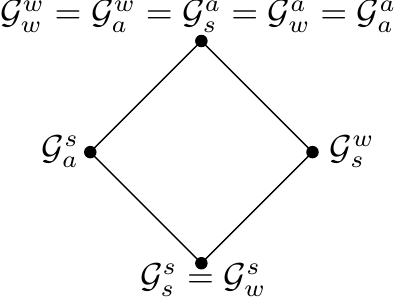}
  \caption{Inclusions and identities among the classes $\mathcal{G}_i^j$.}
  \label{fig:clases}
 \end{figure}

\begin{proof}
Let us begin by proving that $\mathcal{G}_s^s=\mathcal{G}_w^s$, the
inclusion $\mathcal{G}_s^s \subseteq \mathcal{G}_w^s$ is obvious. Take
$G \in \mathcal G_w^s$ drawn in the required way. Since the midpoints of the edges
form a convex set, there exists $\delta > 0$ such that moving every vertex by at
most $< \delta$ in any direction, the set of midpoints of the edges remains 
strictly convex. More precisely, whenever there are vertices $z_1,\ldots,z_k$
in the interior of the segment connecting two vertices $x,y$, we perform the following steps, see Figure~\ref{fig:localtrans}:

 \begin{figure}[htb]
  \centering
  \includegraphics{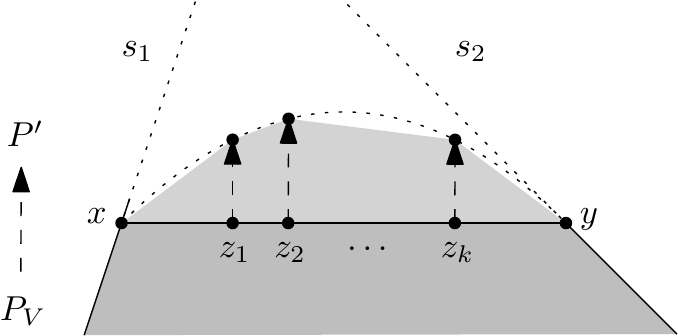}
  \caption{The local modifications to prove $\mathcal{G}_s^s \supseteq \mathcal{G}_w^s$.}
  \label{fig:localtrans}
 \end{figure}

We assume without loss of generality
that $x$ is drawn at $(0,0)$, $y$ is drawn at $(1,0)$ and that $P_V$
is entirely contained in the closed halfplane $\{(a,b)\, \vert \, b
\leq 0\}$. We consider the two adjacent edges to $xy$ in the boundary of $P_V$ and denote by 
$s_1, s_2 \in \mathbb{R} \cup \{\pm \infty\}$ their
slopes.
Now we take $\epsilon:\, 0 < \epsilon < {\rm min}\{\delta, \vert
s_1 \vert, \vert s_2 \vert\}$, we observe that $P' := P_V
\cup \{(a,b) \, \vert \, 0 \leq a \leq 1$ and $0 \leq b \leq
\epsilon a (1-a)\}$ is a convex set. Then, for all $i \in \{1,\ldots,k\}$,
if $z_i$ is drawn at $(\lambda_i,0)$ with $0 < \lambda_i < 1$, we
translate $z_i$ to the point $(\lambda_i, \epsilon \lambda_i (1 -
\lambda_i))$. We observe that the point $z_i$ has been moved a
distance~$< \epsilon/4 < \delta$ and, then, the set of midpoints of
edges is still convex. Moreover, now
$z_1,\ldots,z_k$ are vertices of $P'$. Repeating this
argument when necessary we get that $G \in \mathcal G_s^{s}$.

To prove the strict inclusion
$\mathcal{G}_s^s\subsetneq\mathcal{G}_s^w$ we show that the graph
$K_4 - e$, i.e., the graph obtained from removing an edge $e$ from
the complete graph $K_4$ belongs to $\mathcal{G}_s^w$ but not to
$\mathcal{G}_s^s$. Indeed, if we take $x_0,x_1,x_2,x_3$ the $4$
vertices of $K_4 - e$ and assume that $e = x_2x_3$, it suffices to
draw $x_0 = (1,0)$, $x_1 = (0,0)$, $x_2 = (0,1)$ and $x_3 = (2,1)$
to get that $K_4 - e \in \mathcal G_s^w$. See Figure~\ref{fig:K4-e} for an illustration.

\begin{figure}[htb]
  \centering
  \includegraphics{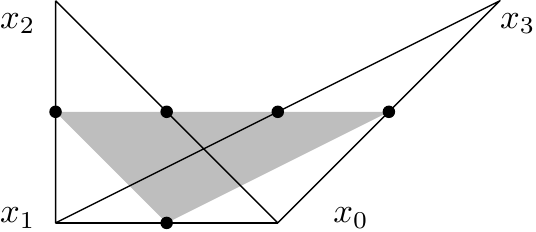}
  \caption{A drawing proving $K_4 - e \in \mathcal G_s^w$.}
  \label{fig:K4-e}
 \end{figure}

Let us now prove that $K_4 - e
\notin \mathcal G_s^s$. To that end, we assume that the set of vertices $\{x_0,x_1,x_2,x_3\}$ is in convex position. By means of an affine transformation we may assume that $x_0,x_1,x_2,x_3$ are drawn at the points
$(1,0), (0,0), (0,1)$ and $(a,b),$ with~$a,b >
0$ respectively. The fact that $\{x_0,x_1,x_2,x_3\}$ is in convex position implies that $a + b
> 1$. If $x_ix_{i+1\ {\rm mod}\ 4}$ is an edge for all $i \in
\{0,1,2,3\}$, then clearly the set of midpoints is not convex because the midpoints
of $x_0x_2$ and $x_1x_3$ are in the convex hull of the midpoints of the other $4$
edges. So, assume that $x_2 x_3$ is not an edge, , i.e., the drawing is like in Figure~\ref{fig:K4-e}. So the midpoints of
the edges are in positions $m_{01} = (0,1/2), m_{12} = (1/2, 0)$, $m_{02} =
(1/2, 1/2)$, $m_{13} = (a/2,b/2)$, $m_{03} = (a/2, (b+1)/2)$. (We will generally denote midpoints in this fashion.) If $m_{01},
m_{12}, m_{02}, m_{13}$ are in convex position, then we deduce that $a < 1$
or $b < 1$ but not both, since otherwise $x_3$ would be in the convex hull of $x_0,x_1,x_2$. 
However, if $a < 1$, then $m_{03}$ belongs to
the convex hull of $\{m_{01},m_{12},m_{02},m_{13}\}$, and if $b < 1$, then $m_{13}$
belongs to the convex hull of $\{m_{01},m_{12},m_{02},m_{03}\}$. Hence, we again
have that the set of midpoints is not convex and we conclude that $K_4 - e \notin
\mathcal{G}_s^s$.

 The
strict inclusion $\mathcal{G}_s^w\subsetneq\mathcal{G}_a^a$ comes as
a direct consequence of Theorem~\ref{2n-2}.

Let us see that every graph belongs to $\mathcal{G}_w^w$, for this
purpose it suffices to show that $K_n \in \mathcal{G}_w^w$. Drawing the vertices in the points 
with coordinates $(0,0)$, and $(1,2^i)$ for~$i \in
\{1,\ldots,n-1\}$ gives the result. Indeed, the midpoints of all the edges lie either in the vertical line 
$x = 1/2$ or in $x = 1$. The choice of $y$-coordinates ensures that in the line $x=1$, no midpoints coincide with other midpoints nor vertices. Hence, we clearly have that
$\mathcal{G}_w^w=\mathcal{G}_a^w=\mathcal{G}_s^a=\mathcal{G}_w^a=\mathcal{G}_a^a$.

The strictness in the inclusions $\mathcal{G}_s^s\subsetneq\mathcal{G}_a^s
\subsetneq \mathcal{G}_w^w$ comes from the fact that $g_a^s =
\Theta(n^{4/3})$~\cite{UpBound2008,LowBound2010,SV2010} and that,
 $g_s^s(n) \leq g_s^w(n) \leq 2n-3$ by Theorem~\ref{2n-2}. This
 also proves that $\mathcal{G}_a^s \not\subset \mathcal{G}_{s}^w$.

To prove that $\mathcal{G}_s^w \not\subset \mathcal{G}_{a}^s$ it
suffices to consider the complete bipartite graph $K_{2,3}$. Indeed,
if $\{x_1,x_2,x_3\}$, $\{y_1,y_2\}$ is the vertex partition, it
suffices to draw $x_1,x_2,x_3$ in $(0,0)$, $(4,0)$, $(3,2)$,
respectively, and $y_1, y_2$ in $(1,1)$, $(4,1)$, respectively, to
get that $K_{2,3} \in \mathcal{G}_s^w$. See Figure~\ref{fig:K23} for an illustration.

\begin{figure}[htb]
  \centering
  \includegraphics[width=.8\textwidth]{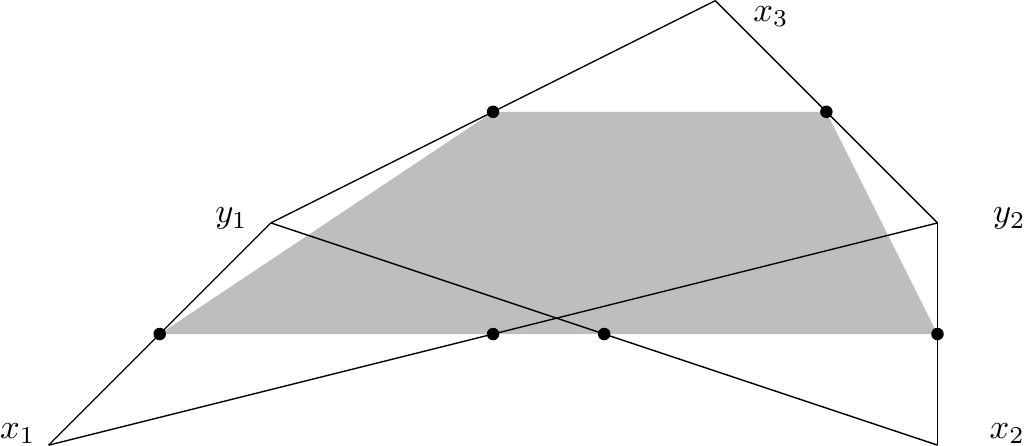}
  \caption{A drawing proving $K_{2,3} \in \mathcal G_s^w$. }
  \label{fig:K23}
 \end{figure}

Finally, $K_{2,3} \notin
\mathcal{G}_a^s$ was already shown in~\cite{Hal-07}.

%
\end{proof}

\subsection{Bounds on numbers of edges}

In this section, we show that $\lfloor\frac{3}{2}(n-1)\rfloor\leq g_s^s(n)\leq g_s^w(n)=2n-3$.

Whenever $V$ is weakly convex, for every vertex $x$, one can order
the neighbors of $x$ according to their clockwise appearance around
the border of $P_V$ starting at $x$. If in this order the neighbors
of $x$ are $y_1,\ldots,y_k$, then we say that~$xy_2,\ldots,xy_{k-1}$
are the {\it interior edges of $x$}. Non-interior edges of $x$ are
called {\it exterior edges of $x$}. Clearly, any vertex has at most
two exterior edges. A \emph{vertex~$v$ sees an edge $e$} if the
straight-line segment connecting $v$ and the midpoint $m_e$ of~$e$
does not intersect the interior of~$P_E$, recall that $P_E$ is the convex hull of the midpoints.

\begin{lemma}\label{sees2}
 If $G\in\mathcal{G}_s^w$, then no vertex sees its interior edges. In particular, any
vertex sees at most $2$ incident edges.
\end{lemma}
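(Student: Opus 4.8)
The plan is to fix a vertex $x$ that has an interior edge $xy_j$ and to show that the segment $[x,m_{xy_j}]$ meets the interior of $P_E$, which is precisely the assertion that $x$ does not see $xy_j$. First I would place $x$ at the origin, so that every incident midpoint satisfies $m_{xy_i}=\tfrac12 y_i$; thus the midpoints of the edges at $x$ are the image of the neighbours $y_1,\dots,y_k$ under the homothety centred at $x$ with ratio $\tfrac12$. In particular they inherit the strictly convex cyclic order of the $y_i$, and the direction from $x$ to $m_{xy_i}$ coincides with the direction from $x$ to $y_i$. Note also that as soon as $x$ has an interior edge we have $k\ge 3$, and then the non-collinear midpoints $m_{xy_1},m_{xy_2},m_{xy_3}$ force $P_E$ to be two-dimensional, so $\mathrm{int}(P_E)\neq\emptyset$ and the statement is not vacuous.

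Since $xy_j$ is interior we have $2\le j\le k-1$, so the consecutive neighbours $y_{j-1}$ and $y_{j+1}$ exist, and $x$ lies outside the boundary arc of $P_V$ spanned by $y_{j-1},y_j,y_{j+1}$. By the standard chord property of a convex polygon, the chord $\overline{y_{j-1}y_{j+1}}$ then separates $y_j$ from $x$. The homothety centred at $x$ with ratio $\tfrac12$ maps this chord to the parallel line $\overline{m_{xy_{j-1}}m_{xy_{j+1}}}$ and fixes $x$, hence it also separates $m_{xy_j}$ from $x$. Because the direction from $x$ to $m_{xy_j}$ lies strictly between the directions to $m_{xy_{j-1}}$ and $m_{xy_{j+1}}$, the segment $[x,m_{xy_j}]$ crosses the chord $[m_{xy_{j-1}},m_{xy_{j+1}}]$ at a point $q$ lying in the relative interior of that chord and strictly between $x$ and $m_{xy_j}$. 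As $m_{xy_{j-1}}$ and $m_{xy_{j+1}}$ are midpoints, the chord is contained in $P_E$, so $q\in P_E$.

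It remains to push $q$ into the interior of $P_E$, and I expect this to be the only delicate point. If $q\in\mathrm{int}(P_E)$ we are done, since $q\in[x,m_{xy_j}]$. Otherwise $q\in\partial P_E$; as $q$ lies in the relative interior of a segment contained in $P_E$, that whole segment, and hence its line $\overline{m_{xy_{j-1}}m_{xy_{j+1}}}$, must lie on $\partial P_E$ and support $P_E$. The midpoints $m_{xy_{j-1}},m_{xy_j},m_{xy_{j+1}}$ being in strictly convex position, $m_{xy_j}\in P_E$ lies strictly off this supporting line; thus $q$ has this line as its unique supporting line while $m_{xy_j}$ does not lie on it, which forces the open segment $(q,m_{xy_j})\subseteq\mathrm{int}(P_E)$. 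In either case $[x,m_{xy_j}]$ meets $\mathrm{int}(P_E)$, so $x$ does not see $xy_j$. The ``in particular'' statement is then immediate: the only incident edges $x$ can possibly see are its at most two exterior edges, so $x$ sees at most two incident edges. The main obstacle throughout is exactly the degenerate configuration in which the separating chord lands on $\partial P_E$, handled above via the uniqueness of the supporting line at $q$.
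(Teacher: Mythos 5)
Your proof is correct and follows essentially the same route as the paper's: both exploit that the midpoints of the edges at $x$ are the images of its neighbours under the homothety of ratio $\frac12$ centred at $x$, so that strict convexity of the vertex set makes a chord of two midpoints strictly separate $x$ from the midpoint of an interior edge, forcing the sight segment to cross $\mathrm{int}(P_E)$. The only minor differences are that the paper blocks with the two exterior neighbours $u_1,u_k$ (inside a four-vertex induced subgraph) while you block with the adjacent neighbours $y_{j-1},y_{j+1}$, and that you treat the degenerate case where the chord lies on $\partial P_E$ via uniqueness of the supporting line, whereas this case can be avoided altogether by noting that the open segment from the crossing point $q$ to the opposite vertex $m_{xy_j}$ of the (non-degenerate) midpoint triangle lies in that triangle's interior, hence in $\mathrm{int}(P_E)$.
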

\begin{proof}
Assume that there exists a vertex $x$ seeing an interior edge
$xu_i$. Take $u_1, u_k$ such that $xu_1, xu_k$ are the exterior
edges of $x$. We consider the induced graph $G'$ with vertex set $V'
= \{v,u_1,u_i,u_k\}$ and denote by $E'$ its corresponding edge set.
Clearly $P_{V'} \subset P_V$ and $P_{E'} \subset P_E$, so $x$ sees
$xu_i$ in $P_{E'}$. Moreover, $xu_i$ is still an interior edge of
$x$ in $G'$. Denote by $m_j$ the midpoint of the edge $vu_j$,
for~$j\in\{1,i,k\}$. Since $x$ sees $xu_i$, the closed halfplane
supported by the line passing through $m_1,m_k$ containing $x$ also
contains $m_i$.

However, since $P_{V'}$ is strictly convex $u_i$ and $x$ are separated by the line passing through $u_1,u_k$. This is a
contradiction because $m_j = (u_j + x)/2$. See Figure~\ref{fig:lemma1}.

 \begin{figure}[htb]
  \centering
  \includegraphics{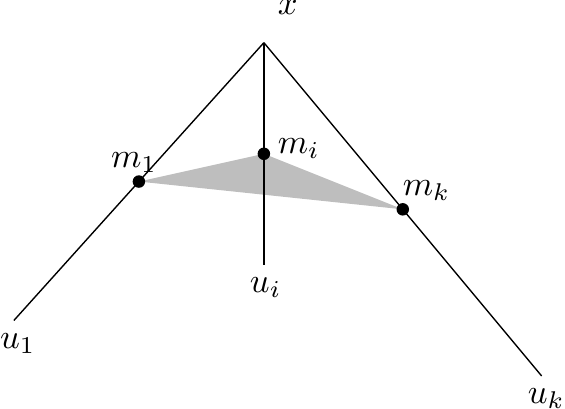}
  \caption{The construction in Lemma~\ref{sees2}}
  \label{fig:lemma1}
 \end{figure}

\end{proof}

\begin{theorem}\label{2n-2}
 If a graph $G\in\mathcal{G}_s^w$ has $n$ vertices, then it has at most $2n-3$ edges, i.e., $g_s^w(n)\leq 2n-3$.
\end{theorem}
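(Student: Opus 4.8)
The plan is to combine Lemma~\ref{sees2} with a double-counting argument and then to extract the final additive constant $3$ from a global convexity argument on $P_E$.

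First I would dispose of the sparse cases by induction on $n$. If $G$ has a vertex $v$ of degree at most $1$, then deleting $v$ gives a graph in $\mathcal{G}_s^w$ on $n-1$ vertices (the class is closed under subgraphs), and the inductive bound yields at most $2(n-1)-3+1=2n-4$ edges; the cases $n\le 3$ are checked by hand. Hence I may assume $G$ has minimum degree at least $2$, so that $P_E$ is a genuinely two-dimensional convex polygon (if it were lower-dimensional the bound is immediate) and every vertex has \emph{exactly} two exterior edges.

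Next I would show that every edge is seen by at least one endpoint. Let $e=xy$ with midpoint $m_e=\tfrac12(x+y)\in\partial P_E$, choose a supporting line $\ell$ of $P_E$ at $m_e$ with outer normal $\theta$, so that $\langle z,\theta\rangle\le \langle m_e,\theta\rangle$ for all $z\in P_E$. Since $m_e$ is the midpoint of $x$ and $y$, the two values $\langle x,\theta\rangle,\langle y,\theta\rangle$ average to $\langle m_e,\theta\rangle$, so at least one endpoint, say $x$, satisfies $\langle x,\theta\rangle\ge\langle m_e,\theta\rangle$; then every point of the segment $xm_e$ has $\langle\cdot,\theta\rangle\ge\langle m_e,\theta\rangle$ and hence avoids $\operatorname{int}P_E$, i.e. $x$ sees $e$. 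By Lemma~\ref{sees2} a vertex sees only exterior edges and at most two of them, so every edge seen by $x$ is exterior at $x$. Writing $t$ for the number of edges that are exterior at both endpoints and summing over vertices, $2n=\sum_v(\text{exterior edges at }v)=\sum_e\#\{\text{endpoints at which }e\text{ is exterior}\}=(m-t)+2t=m+t$, whence $m=2n-t$. Thus it suffices to exhibit three edges that are exterior at both endpoints; I will in fact produce three edges that are seen by both endpoints, which is stronger, since a seen edge is exterior.

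The hard part is this last claim, which is exactly equivalent to the theorem in the minimum-degree-$2$ case. Here I would observe that $xy$ is seen by \emph{both} endpoints precisely when the whole segment $xy$ avoids $\operatorname{int}P_E$, i.e. when the line through $x$ and $y$ supports $P_E$, and then locate three such edges by rotating a supporting line around $P_E$: for each outer normal direction $\theta\in S^1$ let $\mu(\theta)$ be the $\theta$-extreme edge-midpoint and $e(\theta)=x_\theta y_\theta$ the corresponding edge, so that $\mu(\theta)$ traverses $\partial P_E$ once as $\theta$ runs over $S^1$; then $e(\theta)$ is doubly seen exactly when the chord $x_\theta y_\theta$ is tangent to $P_E$ at $\mu(\theta)$, that is, when the direction of $x_\theta-y_\theta$ lies in the tangent cone of $P_E$ at $\mu(\theta)$. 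Comparing the normal $\theta$ with the normal of the chord defines an angular function on $S^1$ whose zeros are precisely the doubly seen edges, and I would use a winding/continuity argument together with the \emph{strict} convexity of the vertex set to show that it has at least three zeros. I expect this counting of zeros to be the main obstacle: a naive continuity argument only guarantees two, and it is exactly here that strict convexity of $V$ must enter, mirroring the way $K_4-e$ fails to lie in $\mathcal{G}_s^s$; in addition the degenerate configurations in which $\mu(\theta)$ lies in the relative interior of a facet of $P_E$, or in which several midpoints are collinear, need separate bookkeeping.
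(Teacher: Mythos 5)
Your first half coincides with the paper's proof of Theorem~\ref{2n-2}: reduce to minimum degree $2$, note that every edge is seen by at least one endpoint (your supporting-line argument correctly fills in a step the paper only asserts), use Lemma~\ref{sees2} to conclude that seen edges are exterior, and double-count exterior incidences to get $m=2n-t$, where $t$ is the number of doubly exterior edges. As you say yourself, everything then hinges on producing three such edges, and this is exactly where your proposal has a genuine gap: the rotating-line/winding argument is only a sketch, and you explicitly concede that a naive continuity argument gives only two ``zeros,'' that you do not know how to force a third, and that degenerate configurations need separate treatment. That missing step is not a technicality---it is the entire content of the theorem beyond the trivial bound $m\leq 2n$, and it is where the paper does its real work. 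The paper's route is elementary and combinatorial: for an edge $e=xy$ that is interior at $x$, it proves by induction on $|H_e^+\cap V|$ that the open halfplane $H_e^+$ (together with $x$) contains a doubly exterior edge; applying this to both sides of any non-doubly-exterior edge yields two doubly exterior edges $f,g$, and a third is extracted by taking an endpoint $z$ of $f$ that is not an endpoint of $g$ (possible by minimum degree $2$) and analyzing the other exterior edge at $z$.

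A second, structural concern: you replace the needed notion (exterior at both endpoints) by the strictly stronger one (seen by both endpoints, equivalently the line through the edge supports $P_E$---your equivalence is correct, using strict convexity of $V$). These notions do not coincide: for example, draw the path $u$--$x$--$y$--$v$ with $u=(-1,1)$, $x=(0,0)$, $y=(1,0)$, $v=(2,-0.8)$; then $xy$ is exterior at both endpoints (both have degree $2$), but the midpoints of $ux$ and $yv$ lie on opposite sides of the line $xy$, and one checks that the segment from $y$ to $m_{xy}$ enters the interior of $P_E$, so $xy$ is not doubly seen. Since the count only requires $t\geq 3$ doubly exterior edges, and the number of doubly seen edges is always at most $t$, you are attacking a statement that is at least as hard as---and possibly strictly harder than---what is needed; it is not even clear that three doubly seen edges always exist under minimum degree $2$. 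The paper sidesteps this entirely by working with the weaker, purely combinatorial notion of doubly exterior edges, for which the halfplane induction goes through.
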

\begin{proof}

Take $G \in \mathcal{G}_s^w$. Since the midpoints of the edges form a weakly convex set,
 every edge has to be seen by at least one of its
vertices. Lemma~\ref{sees2} guarantees that interior edges cannot be
seen. Hence, no edge can be interior to both endpoints. This proves
that $G$ has at most $2n$ edges.

We improve this bound by showing that at least three edges are
exterior to both of their endpoints, i.e., are counted twice in the
above estimate. During the proof let us call such edges \emph{doubly
exterior}.

Since deleting leafs only decreases the ratio of vertices and
edges, we can assume that $G$ has no leafs. Since $2>2n-3$ implies $n\leq 1$ and in this case our statement is clearly true, 
we can also assume that $G$ has at least three edges. For an edge $e$, we denote
by $H_e^+$ and $H_e^-$ the open halfplanes supported by the line
containing $e$. We claim that whenever an edge $e=xy$ is an interior
edge of $x$, then $H_e^+\cup\{x\}$ contains a
doubly exterior edge. This follows by induction on the number of
vertices in $H_e^+\cap P_V$.  If there is a single vertex $z \in H_e^+ \cap P_V$, then 
$xz$ is an exterior edge of $x$ because $xy$ is interior to $x$. Moreover, by convexity of $V$ and since $z$ is the only vertex in $H_e^+$ the edge $xz$ is also exterior to $z$, so it is doubly exterior. We assume now that there is more
than one vertex in $H_e^+ \cap P_V$.
Since $e$ is interior to $x$, there is
an edge $f=xz$ contained in $H_e^+\cup\{x\}$ and exterior of $x$. If
$f$ is doubly exterior we are done. Otherwise, we set $H_f^+$ the
halfplane supported by the line containing~$f$ and not containing
$y$. We claim that $(H_f^+\cup\{z\}) \cap V \subset
(H_e^+\cup\{x\}) \cap V$. Indeed, if there is a point $v \in
(H_f^+\cup\{z\}) \cap V$ but not in $H_e^+\cup\{x\}$, then $x$ is
in the interior of the triangle with vertices $v, y, z \in V$, a
contradiction. Thus, $(H_f^+\cup\{z\}) \cap V$ is contained in
$(H_e^+\cup\{x\}) \cap V$, in
particular, since $(H_f^+\cup\{z\}) \cap V$ does not contain $x$ the inclusion is strict. By induction, we can guarantee
that $(H_e^+\cup\{x\}) \cap P_V$ contains a doubly exterior edge.

Note that an analogous argument yields that $H_e^-\cup\{x\}$, contains a doubly exterior edge if $e$ is an interior edge of $x$.

Applying this argument to any edge $e$ which is not doubly exterior
gives already two doubly exterior edges $f,g$ contained in
$H_e^+\cup\{x\}$ and $H_e^-\cup\{x\}$, respectively. Choose an
endpoint $z$ of $f$, which is not an endpoint of $g$, 
which is possible since we have minimum degree at least two. Let~$h=zw$ be
the other exterior edge of $z$. If $h$ is doubly exterior we are
done. Otherwise, none of $H_h^+\cup\{w\}$ and $H_h^-\cup\{w\}$
contains $f$ because $z \notin H_h^+$ and $z \notin H_h^-$; moreover
one of $H_h^+\cup\{w\}$ and $H_h^-\cup\{w\}$ does not contain $g$.
Thus, there must be a third doubly exterior edge.
\end{proof}

\begin{definition}\label{def:Ln}
For every $n \geq 2$, we denote by $L_n$ the graph consisting of two
paths $P=(u_1, \ldots,u_{\lfloor\frac{n}{2}\rfloor})$ and $Q=(v_1,
\ldots, v_{\lceil\frac{n}{2}\rceil})$ and the edges $u_1v_1$ and
$u_iv_{i-1}$ and $u_{j-1}v_j$ for $1<i\leq\lfloor\frac{n}{2}\rfloor$
and $1<j\leq\lceil\frac{n}{2}\rceil$. We observe that $L_n$ has
$2n-3$ edges.
\end{definition}

\begin{theorem}\label{thm:2n-3lb}
 For all $n\geq 2$ we have $L_n\in\mathcal{G}_s^w$, i.e., $g_s^w(n)\geq 2n-3$.
\end{theorem}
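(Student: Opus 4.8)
The plan is to prove the statement by exhibiting an explicit drawing of $L_n$, using the very clean combinatorial structure of its edge set: besides the two paths $P$ and $Q$, every cross edge joins some $u_a$ to some $v_b$ with $a-b\in\{-1,0,1\}$ (namely $a-b=0$ for $u_1v_1$, $a-b=1$ for the edges $u_iv_{i-1}$, and $a-b=-1$ for the edges $u_{j-1}v_j$). I would place all $n$ vertices on a fixed strictly convex curve, say the parabola $y=x^2$, assigning $u_i$ the parameter $\alpha_i$ and $v_j$ the parameter $\beta_j$. With this choice, strict convexity of the vertex set is automatic as soon as all the $\alpha_i,\beta_j$ are pairwise distinct, so the whole problem reduces to choosing the parameters so that the $2n-3$ midpoints are weakly convex. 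It is useful here that $L_n$ carries the symmetry $u_i\leftrightarrow v_i$ (it maps $u_iv_{i-1}$ to $u_{i-1}v_i$ and each path to the other), so I would pick the parameters respecting this symmetry (e.g.\ $\beta_j=-\alpha_j$, a reflection across the axis of the parabola); then the set of midpoints is symmetric, and it suffices to analyse one half together with the axis.

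The steps I would carry out are: (1) write the parameters explicitly as functions of the index (increasing, and handling the two parities of $n$ and the two path lengths $\lfloor n/2\rfloor,\lceil n/2\rceil$ separately); (2) record that for a point set on $y=x^2$ the midpoint of the edge with parameters $s,t$ is $\bigl(\tfrac{s+t}{2},\,(\tfrac{s+t}{2})^2+(\tfrac{s-t}{2})^2\bigr)$, so a family of edges has collinear midpoints exactly when its parameter pairs $(s,t)$ lie on a common circle in parameter space; (3) group the five edge classes (the two path families, the two diagonal families $u_iv_{i-1}$ and $u_{j-1}v_j$, and the single edge $u_1v_1$) and choose the parameters so that each class lands on such a circle, i.e.\ so that the corresponding midpoints lie on a bounded number of line segments; (4) verify that these segments fit together into the boundary of a convex polygon $P_E$, with the midpoint of $u_1v_1$ serving as an extreme (lowest) vertex and the diagonal families forming the portions of $\partial P_E$ nearest the symmetry axis. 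Since $L_{n+1}$ arises from $L_n$ by adding a single degree-two vertex joined to the current ends of the two paths, an attractive alternative is an induction: maintain a valid drawing and insert the new vertex far out in a suitable direction so that its two new midpoints become the two extreme points of $P_E$ in that direction while no old boundary midpoint is swallowed.

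The main obstacle will be controlling the diagonal (cross-edge) midpoints. By Lemma~\ref{sees2} these edges are forced to be interior at one endpoint in any optimal drawing, and geometrically their midpoints are pulled toward the interior of $P_E$: an edge $u_iv_{i-1}$ joins a vertex on one side of the configuration to one on the other, so its midpoint sits \emph{between} the path-edge midpoints, which are the midpoints of edges joining same-side vertices. Indeed, the naive symmetric placement $\beta_j=-\alpha_j$ fails precisely here: it makes each diagonal midpoint share its height with a path midpoint, trapping it strictly inside a horizontal chord of $P_E$. The crux of the proof is therefore to break this degeneracy with an asymmetric, carefully tuned parameter choice so that the diagonal midpoints come to lie exactly on the boundary of $P_E$ — extremal in the horizontal rather than the vertical direction — rather than in its interior; once the parameters are pinned down, verifying strict convexity of the vertices and weak convexity of the midpoints is a monotonicity check on slopes and coordinates, with only the parity bookkeeping and the exceptional edge $u_1v_1$ requiring separate attention.
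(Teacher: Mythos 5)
Your proposal never actually produces a drawing: the step you yourself call ``the crux'' --- finding an asymmetric, carefully tuned parameter choice that pushes the diagonal midpoints onto the boundary of $P_E$ --- is left completely open, so what you have is a plan, not a proof. Worse, no such parameter choice exists for your layout, because the layout is ruled out combinatorially. Assigning the $u_i$ increasing parameters on one side of the parabola's axis and the $v_j$ parameters on the other side (your symmetric base choice $\beta_j=-\alpha_j$ and any tuning that keeps the two paths laid out consecutively along the curve) fixes the clockwise order of the vertices around $P_V$ to be $v_{\lceil n/2\rceil},\ldots,v_1,u_1,\ldots,u_{\lfloor n/2\rfloor}$. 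With respect to this cyclic order, a cross edge is interior at \emph{both} of its endpoints as soon as $n\geq 5$: for instance in $L_6$ the edge $u_2v_1$ is interior at $u_2$ (whose first and last neighbours are $u_3$ and $u_1$) and interior at $v_1$ (whose first and last neighbours are $u_1$ and $v_2$); for $n\geq 7$ the edge $u_3v_2$ plays the same role. By Lemma~\ref{sees2} --- which you cite --- no vertex of a drawing with strictly convex vertices and weakly convex midpoints sees any of its interior edges, while weak convexity of the midpoint set forces every edge to be seen by at least one endpoint (this is the opening step of the proof of Theorem~\ref{2n-2}). Hence for every $n\geq 5$ \emph{no} assignment of parameters respecting your left/right split can make the midpoints weakly convex: the failure you observed for $\beta_j=-\alpha_j$ is not a degeneracy to be broken by tuning, but this combinatorial obstruction in disguise. (Your computation is fine for $L_4$, where the symmetric choice does work, but that case is not the issue.)

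The obstruction can only be removed by changing the cyclic order of the vertices on the convex hull, and that is exactly what the paper's construction does. Its drawing is a lens-shaped configuration in which the two paths zigzag between the left half and the right half of the point set, so consecutive path vertices are never consecutive on the hull; then every edge of $L_n$ --- path edge or cross edge --- spans the two halves, no edge is interior at both endpoints, and the vertices can be placed so that all $2n-3$ midpoints fall on just two vertical lines, making weak convexity immediate. Any correct proof along your lines would have to abandon the path-ordered placement and realize such an interleaved cyclic order on the parabola, which is a genuinely different construction from the one you describe.
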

\begin{proof}

For every $k \geq 1$ we construct a drawing showing 
$L_{4k+2}\in\mathcal{G}_s^w$ (the result for other values of $n$
follows by suppressing degree $2$ vertices). We take $0 < \epsilon_0
< \epsilon_1 < \cdots < \epsilon_{2k}$ and set $\delta_j := \sum_{i
= j}^{2k} \epsilon_i$ for all $j \in \{0,\ldots,2k\}$. We consider
the graph $G$ with vertices $r_i = (i, \delta_{2i}), r_i' = (i,
-\delta_{2i})$ for $i \in \{0,\ldots,k\}$ and $\ell_i = (-i,
\delta_{2i-1}), \ell_i' = (-i, -\delta_{2i-1})$ for $i \in
\{1,\ldots,k\}$; and edge set
$$\{r_0 r_0'\} \cup \{r_i \ell_i, r_i \ell_i', r_i' \ell_i,
r_i'\ell_i' \, \vert \, 1 \leq i \leq k\} \cup \{r_{i-1} \ell_i,
r_{i-1} \ell_i', r_{i-1}' \ell_i, r_{i-1}'\ell_i' \, \vert \, 1 \leq
i \leq k\}.$$
\begin{figure}[htb]
  \centering
  \includegraphics{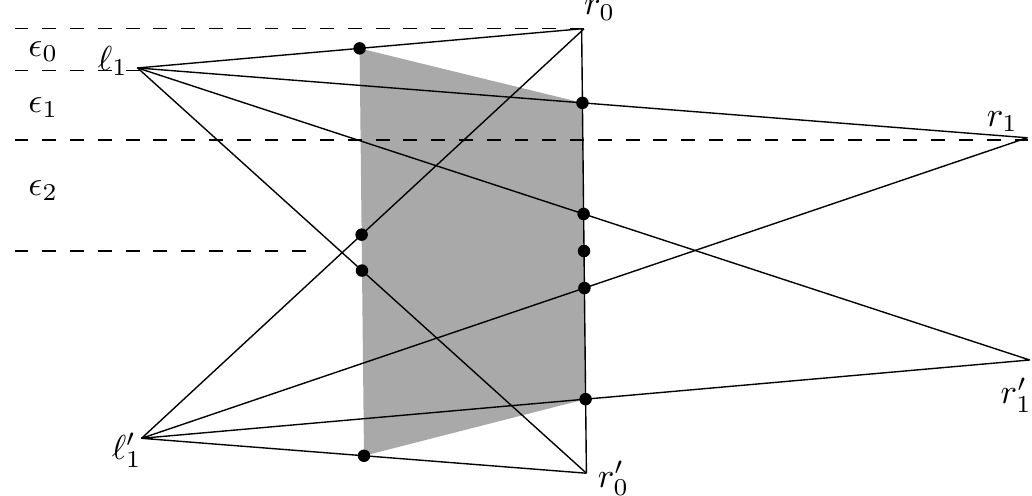}
  \caption{The graph $L_6$ is in $\mathcal{G}_s^w$.}
  \label{fig:2n-3base}
 \end{figure}
See Figure~\ref{fig:2n-3base} for an illustration of the final
drawing. The choice of $\epsilon_i$ forming an increasing sequence yields directly that the set of vertices is strictly convex.
Moreover, the midpoints of the edges all lie on the vertical lines $x = 0$ and~$x = -1/2$; thus they
form a weakly convex set. It is straightforward to verify that
the constructed graph is $L_{4k+2}$.

\end{proof}

We observe that $L_4 = K_4 - e$ and that $L_4$ is a subgraph of $L_n$ for all $n \geq 4$. As we proved in Theorem~\ref{thm:clases},
$K_4 - e$ does not belong to $\mathcal G_{s}^s$. Hence, for all $n \geq 4$ we have that $L_n \notin \mathcal G_s^s$.

\medskip

Theorem~\ref{2n-2} together with Theorem~\ref{thm:2n-3lb} yield the exact value of $g_s^w(n) = 2n-3$. Moreover, since $\mathcal G_s^s \subset \mathcal G_s^w$, from
Theorem~\ref{2n-2} we also deduce the upper bound $g_s^s(n) \leq 2n-3$. The rest of this section is devoted to provide a lower bound for $g_s^s(n)$.

\begin{definition}
For every odd $n \geq 3$, we denote by $B_n$ the graph obtained from
identifying a $C_3$ and $\frac{n-3}{2}$ copies of $C_4$ altogether
identified along a single edge $uv$. We observe that $B_n$ has
$\frac{3}{2}(n-1)$ edges and deleting a degree $2$ vertex from
$B_{n}$ one obtains an $(n-1)$-vertex graph with
$\frac{3}{2}(n-2)-\frac{1}{2}$ edges.
\end{definition}

\begin{theorem}\label{thm:3/2n-2lb}
For all odd $n\geq 3$ we have $B_n\in\mathcal{G}_s^s$, i.e.,
$g_s^s(n)\geq \lfloor\frac{3}{2}(n-1)\rfloor$.
\end{theorem}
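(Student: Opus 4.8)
The plan is to exhibit an explicit drawing of $B_n$. Write $k=\frac{n-3}{2}$ and regard $B_n$ as the generalized theta graph with two poles $u,v$ joined by the spine edge $uv$, one path $u$-$w$-$v$ of length two (coming from the $C_3$), and $k$ internally disjoint paths $u$-$b_i$-$a_i$-$v$ of length three (coming from the $C_4$'s). The first move is a normalization that forces the single midpoint $m_{uv}$ to behave: I would place $u$ and $v$ on a common line $\ell$ with \emph{every} other vertex strictly on one side of $\ell$. Then each edge other than $uv$ has an endpoint off $\ell$, so its midpoint lies strictly on that side, whereas $m_{uv}\in\ell$; hence $m_{uv}$ is the unique extreme midpoint in the direction orthogonal to $\ell$ and is automatically a vertex of $P_E$. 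With this normalization the spine edge is a boundary edge of $P_V$ and the remaining task is to place $w$ and the pairs $(a_i,b_i)$ so that the \emph{other} midpoints, together with $m_{uv}$, are in strictly convex position, while the vertex set stays strictly convex.

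The key structural observation is that the midpoints are not free: they are tied together by the elementary fact that the midpoints of the four sides of a quadrilateral form a parallelogram. Taking $m_{uv}$ as the origin and writing $u=-e$, $v=e$, the four midpoints of the $i$-th quadrilateral are $0,\ \beta_i,\ \alpha_i+\beta_i,\ \alpha_i$ in cyclic order, where $\beta_i:=m_{ub_i}$ and $\alpha_i:=m_{va_i}$; that is, each $C_4$ contributes a parallelogram sharing the corner $0=m_{uv}$ with all the others, and the triangle contributes the two midpoints $m_{uw}$ and $m_{vw}=m_{uw}+e$. Conversely, choosing the vectors $e$, $m_{uw}$ and all $\alpha_i,\beta_i$ freely determines every vertex via $b_i=2\beta_i+e$, $a_i=2\alpha_i-e$ and $w=2m_{uw}+e$. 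Thus the whole problem becomes combinatorial-geometric: arrange a \emph{fan} of $k$ parallelograms, all sharing the corner $0$, together with one extra medial triangle, so that all of their outer corners are in strictly convex position, and so that the vertex set recovered by doubling these midpoint vectors and translating by $\pm e$ is strictly convex as well.

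To realize this I would fan the parallelograms upward from $0$: choose the $\beta_i$ on a strictly convex arc in the upper left and the $\alpha_i$ on a strictly convex arc in the upper right, so that the corners $\beta_i$ run up the left side of $P_E$, the corners $\alpha_i$ run up the right side, the opposite corners $\alpha_i+\beta_i$ form the top chain, while $m_{uv}=0$ sits at the bottom and $m_{uw},m_{vw}$ are slotted in low near the spine. Concretely I would drive the construction by a rapidly increasing sequence of parameters, in the spirit of the sequences $\epsilon_i,\delta_j$ used in the drawing of $L_n$, so that the convex-position checks reduce to comparisons of dominant terms rather than delicate inequalities; the vertices are then read off from the formulas $b_i=2\beta_i+e$, $a_i=2\alpha_i-e$, $w=2m_{uw}+e$ and their strict convexity is checked the same way.

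The step I expect to be the real obstacle is exactly the coupling imposed by the parallelogram relation against the requirement that the top chain be \emph{strictly} convex. The most natural, symmetric choice — reflecting the left page onto the right — is fatal: it forces every middle midpoint $m_{a_ib_i}=\alpha_i+\beta_i$ onto the axis of symmetry, so these points become collinear and the midpoint set is only weakly convex (this is precisely why a symmetric layout lands $B_n$ in $\mathcal{G}_s^w$ but not in $\mathcal{G}_s^s$). The delicate part is therefore to break the symmetry just enough to spread the sums $\alpha_i+\beta_i$ into a strictly convex top chain, while simultaneously keeping the $k$ parallelograms fanned, the two triangle midpoints correctly placed, and the doubled vertex set strictly convex; controlling all of these at once is where I would rely on the rapidly-increasing-parameter choice.
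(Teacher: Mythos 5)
Your reformulation is sound as far as it goes: the Varignon observation that each $C_4$-page contributes a parallelogram of midpoints $0,\ \beta_i,\ \alpha_i+\beta_i,\ \alpha_i$ hinged at $m_{uv}$, the recovery formulas $b_i=2\beta_i+e$, $a_i=2\alpha_i-e$, $w=2m_{uw}+e$, and the diagnosis that a mirror-symmetric layout forces the middle midpoints $\alpha_i+\beta_i$ to be collinear (hence only weakly convex) are all correct. The paper's own construction runs into exactly this degeneracy: its initial drawing places $u$ and the left vertices on the line $x+y=-1$, $v$ and the right vertices on $x-y=1$, and all midpoints of the diagonal edges on a single horizontal line, and it explicitly calls this drawing unfeasible for that reason.

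But your argument stops exactly where the theorem starts. Having set up the framework, you never exhibit the vectors $e$, $m_{uw}$, $\alpha_i$, $\beta_i$, nor verify that any concrete choice makes the full midpoint set ($0$, the $\beta_i$, the $\alpha_i$, the sums $\alpha_i+\beta_i$, together with $m_{uw}$ and $m_{uw}+e$) and the doubled vertex set simultaneously strictly convex; you yourself label this coupling ``the real obstacle'' and defer it to an unspecified rapidly-increasing-parameter choice. That existence claim is the entire content of the theorem --- everything before it is a change of coordinates. The paper's proof consists precisely of the step you omit: it writes down explicit coordinates ($u=(-1,0)$, $v=(1,0)$, $w=(0,\tfrac{n-1}{2})$, $p^{\ell}_i=(-1-i,i)$, $p^r_i=(1+i,i)$, with the skewed diagonal pairing $p^{\ell}_i p^r_{(n-3)/2+1-i}$) and then an explicit symmetry-breaking perturbation, decreasing the $y$-coordinates of the pairs $p^{\ell}_{(n-3)/2+1-i}, p^r_{(n-3)/2+1-i}$ by $2^i\epsilon$ and lowering $w$, i.e., it commits to a concrete geometric-sequence perturbation of a degenerate drawing. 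Your plan is plausibly completable in the same spirit --- for instance, clustering the $\beta_i$ near a fixed point $B$ and the $\alpha_i$ near a fixed point $A$ with deviations on wildly different scales, so that the sums $\alpha_i+\beta_i$ inherit strict convexity from the dominant deviations --- but as written, with no parameters chosen and no convexity verified, the proposal is a proof strategy rather than a proof.
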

\begin{proof}
Let $n\geq 3$ be such that $n-3$ is divisible by $4$ (if $n-3$ is
not divisible by $4$, then $B_n$ is an induced subgraph of
$B_{n+1}$). We will first draw $B_n$ in an unfeasible way and then
transform it into another one proving $B_n\in\mathcal{G}_s^s$.

 See Figure~\ref{fig:2nover3} for an illustration of the final drawing.
\begin{figure}[htb]
  \centering
  \includegraphics[width=\textwidth]{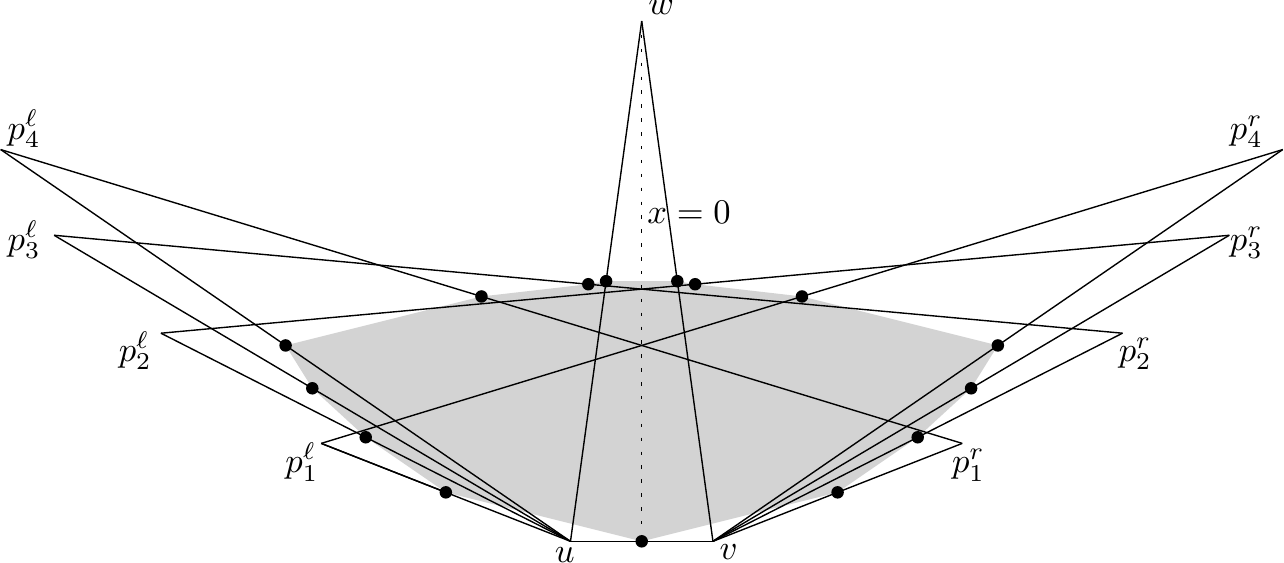}
  \caption{The graph $B_{11}$ is in $\mathcal{G}_s^s$.}
  \label{fig:2nover3}
 \end{figure}

We draw the $C_3=(uvw)$ as an isosceles triangle with horizontal
base $uv$. Let $u=(-1,0)$, $v=(1,0)$, and $w=(0,\frac{n-1}{2})$.
There are $n-3$ remaining points. Draw one half of them on
coordinates $p^{\ell}_i=(-1-i,i)$ for $1\leq i\leq\frac{n-3}{2}$ and
the other half mirrored along the $y$-axis, i.e., $p^r_i=(1+i,i)$
for $1\leq i\leq\frac{n-3}{2}$.

Now we add all edges  $p^{\ell}_iu$ (left edges), $p^r_iv$ (right
edges), for $1\leq i\leq\frac{n-3}{2}$ and edges of the form
$p^{\ell}_ip^r_{\frac{n-3}{2}+1-i}$ (diagonal edges) for all $1\leq
i\leq\frac{n-3}{2}$.

We observe that the points $p^{\ell}_i$ and $u$ lie on the line $x +
y = - 1$, the points~$p^{r}_i$ and $v$ lie on the line $x - y = 1$
and all midpoints of diagonal edges have $y$-coordinate
$\frac{n-1}{4}$. In order to bring the set of vertices and the set of midpoints of edges into 
convex sets, we simultaneously decrease the $y$-coordinates of
points $p^{\ell}_{\frac{n-3}{2} + 1 - i}, p^r_{\frac{n-3}{2} + 1 -
i}$ by $2^i \epsilon$ for $i \in \{1,\ldots,\frac{n-3}{2}\}$ for a
sufficiently small value $\epsilon
> 0$. Finally, we conveniently decrease the
$y$-coordinate of $w$ to get a drawing witnessing that
$B_n\in\mathcal{G}_s^s$.

\end{proof}

\subsection{Further members of $\mathcal{G}_s^s$ and $\mathcal{G}_s^w$}

We show that there are non-planar graphs in $\mathcal{G}_s^s$ and cubic graphs in $\mathcal{G}_s^w$.

\begin{definition}For all $k \geq 2$, we denote by $H_k$ the graph consisting of a $2k$-gon with vertices
$v_1,\ldots,v_{2k}$ and a singly subdivided edge from $v_i$ to
$v_{i+3 \bmod 2k}$ for all $i$ even, i.e., adjacent to the $v_i$ there are $k$ additional degree $2$
vertices $u_1,\ldots,u_k$ and edges $u_iv_{2i}$ for all~$i \in
\{1,\ldots,k\}$, $u_iv_{2i+3}$ for all $i \in \{1,\ldots,k-2\}$,
$u_{k-1}v_1$ and $u_kv_3$. We observe that $H_k$ is planar if and
only if $k$ is even, see Figure~\ref{fig:K33} for a drawing of $H_3$.
\end{definition}

 \begin{figure}[htb]
  \centering
  \includegraphics{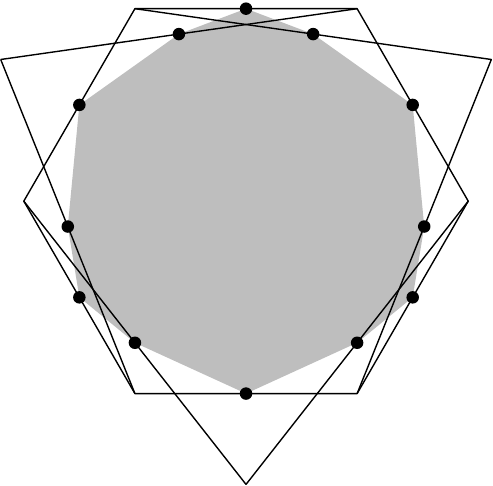}
  \caption{The graph $H_3$ drawn as in Theorem~\ref{thm:3over4}.}
  \label{fig:K33}
 \end{figure}

\begin{theorem}\label{thm:3over4}
 For every $k \geq 2$, $H_k \in \mathcal{G}_s^s$. In particular, for every $n \geq 9$
there is a non-planar $n$-vertex graph in $\mathcal{G}_s^s$.
\end{theorem}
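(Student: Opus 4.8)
The plan is to construct an explicit strictly convex drawing of $H_k$ in which all midpoints of edges land on strictly convex position. The graph $H_k$ consists of a $2k$-gon $v_1,\ldots,v_{2k}$ together with $k$ subdivision vertices $u_1,\ldots,u_k$; each $u_i$ has exactly two incident edges, namely $u_iv_{2i}$ and an edge to $v_{2i+3 \bmod 2k}$. First I would place the $2k$ polygon vertices in strictly convex position with a high degree of symmetry — the natural choice being to distribute them regularly or near-regularly on a circle (or on a strictly convex curve such as a parabola), so that the cyclic structure of the $2k$-gon is respected. The vertices $v_i$ with $i$ even are the ones carrying the subdivided chords, so I would want the $u_j$ to be inserted so as not to disturb convexity of the whole vertex set; placing each $u_j$ slightly outside the polygon, near the chord it subdivides but pushed outward onto the convex hull, should keep $V$ strictly convex.

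The key technical point is to control the midpoints. The midpoints fall into two families: midpoints of polygon edges $v_iv_{i+1}$, and midpoints of the subdivided-chord edges $u_jv_{2j}$ and $u_jv_{2j+3}$. I would exploit symmetry so that these midpoints lie on an identifiable strictly convex curve. A clean approach is to choose coordinates so that each class of midpoints is forced onto a single strictly convex curve — for instance, arranging the $v_i$ on a circle of one radius and choosing the $u_j$ so that the relevant midpoints lie on circles (or convex arcs) of appropriate radii, then perturbing by the same $2^i\epsilon$-type trick used in Theorem~\ref{thm:3/2n-2lb} to break any accidental collinearities or coincidences and restore strict convexity. The figure for $H_3$ (Figure~\ref{fig:K33}) already exhibits the intended symmetric drawing, so the proof would describe the general-$k$ version of precisely that picture and verify strict convexity of both point sets.

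The final sentence — that for every $n\geq 9$ there is a non-planar $n$-vertex graph in $\mathcal{G}_s^s$ — follows once we know $H_k\in\mathcal{G}_s^s$ for all $k\geq 2$: by the definition of $H_k$, it has $3k$ vertices and is non-planar exactly when $k$ is odd, so the smallest non-planar example is $H_3$ on $9$ vertices. To cover every $n\geq 9$ rather than only multiples of $3$, I would observe that subdividing an edge of a non-planar $H_k$ (or attaching degree-$2$ vertices) preserves both non-planarity and, by the local perturbation argument already established in the proof of $\mathcal{G}_s^s=\mathcal{G}_w^s$ in Theorem~\ref{thm:clases}, membership in $\mathcal{G}_s^s$; adding such subdivision vertices lets us realize every $n\geq 9$.

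The main obstacle I anticipate is verifying that the midpoints of all three edge-types remain in \emph{strictly} convex position simultaneously, since the polygon-edge midpoints and the chord-edge midpoints are naturally interleaved around the drawing and could easily become collinear or even coincide for a symmetric placement. Managing this is exactly what the infinitesimal asymmetric perturbation (the $2^i\epsilon$ shifts) is designed to handle: it separates coincident midpoints and turns weakly convex configurations into strictly convex ones without destroying convexity of the vertex set, provided $\epsilon$ is chosen small enough relative to the gaps in the unperturbed drawing.
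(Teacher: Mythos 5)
Your high-level plan coincides with the paper's: draw the $2k$-gon regularly and place each subdivision vertex $u_j$ just outside the polygon, over the middle edge of the three polygon edges spanned by its chord. But the entire difficulty of the theorem lies in the step you defer: pinning down \emph{where} each $u_j$ goes so that the midpoints of the chord edges $u_jv_{2j}$, $u_jv_{2j+3}$ together with the $2k$ polygon-edge midpoints form a strictly convex set, and your proposed mechanism for this does not work. The two families of midpoints cannot simply be put ``on circles of appropriate radii'': a union of points on two concentric circles is in general not convex, and whether it is convex depends on a delicate angular interleaving condition, not on symmetry alone. More importantly, the $2^i\epsilon$ perturbation trick from Theorem~\ref{thm:3/2n-2lb} cannot rescue a failed placement. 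That trick repairs \emph{weak} convexity (collinear or coincident points on the hull boundary); it is powerless against genuine non-convexity, which is an open condition: if some chord-edge midpoint lands strictly inside the convex hull of the remaining midpoints, every sufficiently small perturbation leaves it there. So your proof needs, and does not contain, an unperturbed placement whose midpoint set is already at least weakly convex. This is exactly what the paper supplies: with the relevant polygon edge $e=xy$ horizontal and $x',y'$ the outer neighbours, it puts $v_e$ on the vertical ray through $m_e$ at a height chosen so that the midpoint of $v_ex'$ lies in the triangle $\Delta$ with vertices $m_{x'x}$, $x$, $m_e$ (and then, by symmetry, the midpoint of $v_ey'$ lies in the mirror triangle $\Delta'$); these triangles are precisely the ``pockets'' over the edges of the midpoint polygon into which one new point can be inserted without destroying convexity, and the alternation of subdivided edges makes the $k$ insertions compatible. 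Without some such quantitative placement argument, your proof is not complete.

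A second, smaller gap is in your derivation of the ``every $n\geq 9$'' statement. You propose to subdivide edges of $H_3$ (or attach degree-$2$ vertices) and claim membership in $\mathcal{G}_s^s$ is preserved ``by the local perturbation argument in the proof of $\mathcal{G}_s^s=\mathcal{G}_w^s$''. That argument moves vertices lying in the interior of hull edges of $P_V$; it says nothing about inserting a new vertex into an existing edge, which changes the midpoint set (one midpoint is replaced by two) and is not known to preserve membership in $\mathcal{G}_s^s$ -- indeed the paper only \emph{conjectures} that sufficient subdivision helps. The cheap correct fix is to pad $H_3$ with $n-9$ isolated vertices placed so that the vertex set stays strictly convex (isolated vertices contribute no midpoints), which preserves non-planarity and membership in $\mathcal{G}_s^s$ for every $n\geq 9$.
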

\begin{proof}
We start by drawing $C_{2k}$ as a regular $2k$-gon. Take an edge $e
= xy$ and denote by $x',y'$ the neighbors of $x$ and $y$,
respectively. For convenience consider $e$ to be of horizontal slope
with the $2k$-gon below it. Our goal is to place $v_e$ a new vertex
and edges $v_e x'$, $v_e y'$ preserving the convexity of vertices
and midpoints of edges. We consider the upward ray $r$ based at the
midpoint $m_e$ of $e$ and the upward ray $s$ of points whose
$x$-coordinate is the average between the $x$-coordinates of $m_e$
and $x'$. We denote by $\Delta$ the triangle with vertices the
midpoint $m_{x'x}$ of the edge $x'x$, the point $x$ and $m_e$.  Since
$s \cap \Delta$ is nonempty, we place $v_e$ such that the midpoint
of $v_ex'$ is in $s \cap \Delta$.  Clearly $v_e$ is on $r$ and lies in the triangle defined by $xy$ and the lines supporting edges $x'x$ and $y'y$. Hence,
the middle point of $v_ey'$ is in the corresponding triangle
$\Delta'$ and the convexity of vertices and midpoints of edges is
preserved. See Figure~\ref{fig:4over3} for an illustration. Since we
only have to add a vertex on every other edge of $C_{2k}$, these
choices are independent of each other. It is easy to verify that the
constructed graph is $H_k$.

 \begin{figure}[htb]
  \centering
  \includegraphics{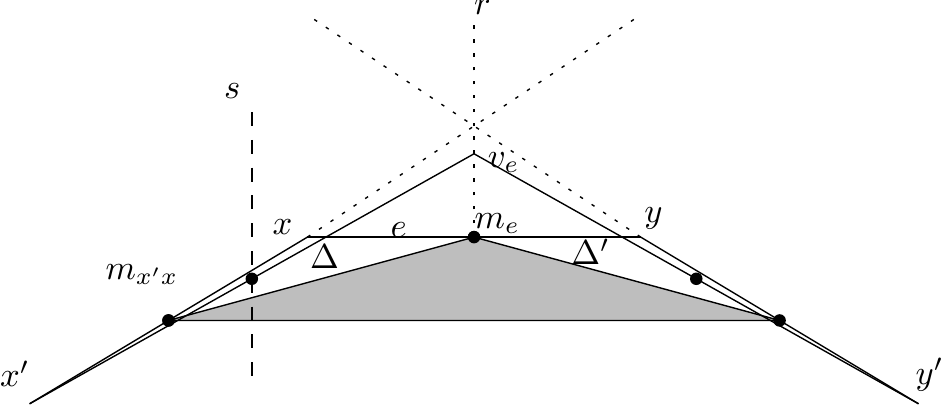}
  \caption{The construction in Theorem~\ref{thm:3over4}}
  \label{fig:4over3}
 \end{figure}



\end{proof}

\begin{definition}For all $k \geq 3$, we denote by $P_k$ the graph consisting of a prism over a $k$-cycle. We observe that $P_k$ is
a $3$-regular graph.
\end{definition}

\begin{theorem}\label{thm:3over2}
For every $k \geq 3$, $P_k \in \mathcal{G}_s^w$. In particular, for
every even $n\geq 6$ there is a $3$-regular $n$-vertex graph in
$\mathcal{G}_s^w$.
\end{theorem}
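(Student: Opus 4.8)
The goal is to construct, for every $k \geq 3$, a straight-line drawing of the prism $P_k$ witnessing $P_k \in \mathcal{G}_s^w$: vertices in strictly convex position and edge-midpoints forming a weakly convex set. The prism has two $k$-cycles (an ``inner'' and an ``outer'' cycle) joined by a perfect matching of ``rung'' edges. My plan is to place all $2k$ vertices on a single strictly convex curve — a parabola is the natural choice — and to choose the two cycles to interleave along this curve so that their midpoints can be forced onto a small number of lines.

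\textbf{Setup of the construction.} First I would place the $2k$ vertices along the parabola $y = x^2$ at carefully chosen $x$-coordinates, which automatically guarantees that the vertex set is strictly convex. The central idea is to arrange the two $k$-cycles and the matching so that edges of the same ``type'' have collinear midpoints. Concretely, I would try to group the edges into a few families (the two cycle-edge families and the rung family), and pick $x$-coordinates so that within each family all midpoints lie on a common vertical line or a common line of fixed slope. The reason the parabola helps: the midpoint of the segment joining $(a, a^2)$ and $(b, b^2)$ has $x$-coordinate $\frac{a+b}{2}$, so forcing two endpoints' $x$-coordinates to have a prescribed sum pins the midpoint's horizontal position — an easy handle for making midpoints collinear on vertical lines. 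The weak convexity of the midpoints would then follow from their lying on a constant number of lines whose union bounds a convex region, exactly as in the proofs of Theorem~\ref{thm:2n-3lb} (lines $x=0$, $x=-1/2$) and Theorem~\ref{thm:clases} ($K_n \in \mathcal{G}_w^w$).

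\textbf{Execution.} After fixing the combinatorial pattern, I would explicitly list the $x$-coordinates (likely symmetric about a center, as in the earlier constructions) and verify three things: that the vertices remain strictly convex (immediate from distinct abscissae on the parabola), that no midpoint coincides with a vertex or another midpoint (a genericity condition, handled by a small perturbation of the coordinates or by a spacing argument like the $2^i\epsilon$ and $2^i$ tricks used in Theorems~\ref{thm:2n-3lb} and~\ref{thm:3/2n-2lb}), and that the full midpoint set is weakly convex. I expect the drawing to realize $P_k$ by having the outer cycle, inner cycle, and rungs each contribute midpoints to dedicated lines, with the extremal rungs closing up the convex boundary.

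\textbf{Main obstacle.} The hard part will be the combinatorial bookkeeping of \emph{which} prism edges map to which line while simultaneously keeping the incidence structure exactly that of $P_k$ and not collapsing distinct midpoints onto one another. Unlike $L_n$, the prism is $3$-regular and every vertex carries three edges of potentially three different families, so a single vertex's position must be consistent with the collinearity demands of all three families at once; balancing these constraints for all $k$ uniformly (rather than case-by-case) is the delicate point. I would resolve it by choosing a symmetric, parametrized coordinate scheme and checking the midpoint collinearity as a set of linear equations in the coordinates, then arguing the generic perturbation preserves strict convexity of vertices and distinctness of midpoints while keeping them on the prescribed boundary lines.
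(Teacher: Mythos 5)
Your proposal is a plan, not a proof: the entire content of the theorem is an explicit drawing of $P_k$, and you never produce one. There are no coordinates, no assignment of the $3k$ edges to carrier lines, and no verification of weak convexity; you yourself flag the ``combinatorial bookkeeping'' at the $3$-regular vertices as the unresolved main obstacle and only promise to resolve it by solving a system of equations you do not write down. (Note also that with vertices constrained to $y=x^2$, forcing a midpoint onto a \emph{non-vertical} line is a quadratic, not linear, condition in the abscissae.)

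Worse, the guiding idea --- all midpoints on a \emph{small (constant) number} of lines, which is exactly what the parabola's ``sum of abscissae'' trick supports --- provably cannot work for the prism. If two edges $uv$ and $uw$ share the vertex $u$ and their midpoints lie on the same vertical line, then $x_v=x_w$, so $v=w$; hence the edges assigned to any one vertical line form a matching, carrying at most $k$ of the $3k$ edges. On the other hand, a convex polygon has at most two sides parallel to any fixed direction, so in a weakly convex midpoint set at most two vertical lines can carry more than $2$ midpoints each. Counting: $3k \le k+k+2(t-2)$, so you need $t\ge (k+4)/2$ vertical lines --- growing with $k$, not constant. This is no accident: the paper's construction places the $2k$ vertices as a regular $2k$-gon, takes the rungs of the prism to be every other side of the $2k$-gon and the two $k$-cycles to be the ``skip-two'' edges $v_iv_{i+2}$; the rung midpoints form a regular $k$-gon, and a symmetry/slope argument shows each cycle-edge midpoint lies on a side of that $k$-gon. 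So the midpoints lie on $k$ lines (the sides of the $k$-gon), matching the lower bound above. To fix your argument you would have to abandon the ``few lines'' framework and instead aim for something of this shape, e.g.\ midpoints distributed along the boundary of a polygon with $\Theta(k)$ sides.
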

\begin{proof}
Let $k\geq 3$. In order to draw $P_k$, place $2k$ vertices $v_0,
\ldots, v_{2k-1}$ as the vertices of a regular $2k$-gon in the plane. Add all
\emph{inner} edges of the form $v_iv_{i+ 2\bmod 2k}$ for all $i$ and
\emph{outer} edges $v_iv_{i+ 1\bmod 2k}$ for $i$ even. Clearly, the
midpoints of outer edges form a strictly convex set and their
convex hull is a regular $k$-gon. Now, consider four consecutive vertices 
in the boundary of the $2k$ gon, say
$v_0, \ldots, v_3$. They induce two outer edges, $v_0v_1$ and
$v_2v_3$ and two inner edges $v_0v_2$ and $v_1v_3$. Now, the
triangles~$v_0v_1v_2$ and $v_1v_2v_3$ share the base segment
$v_1v_2$. Hence, the segments~$m_{v_2v_3}m_{v_1v_3}$
and~$m_{v_2v_0}m_{v_1v_0}$ share the slope of $v_1v_2$. Now, since
the angle between $v_1v_2$ and $v_2v_3$ equals the angle between
$v_1v_2$ and $v_0v_1$ and $v_0v_1$ and $v_2v_3$ are of equal length,
the segment $m_{v_2v_3}m_{v_1v_0}$ also has the same slope. Thus,
all the midpoint lie on a line and all midpoints lie on the boundary
of the midpoints of outer edges. See Figure~\ref{fig:3over2} for an
illustration.
  \begin{figure}[htb]
  \centering
  \includegraphics{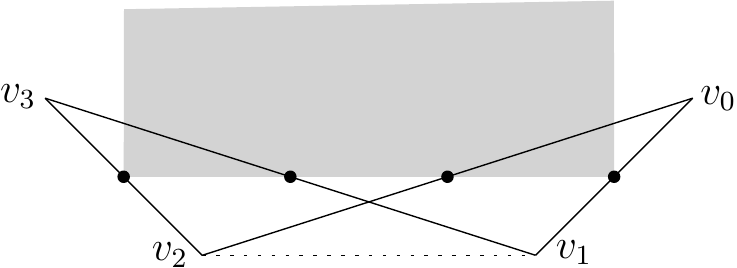}
  \caption{The construction in Theorem~\ref{thm:3over2}}
  \label{fig:3over2}
 \end{figure}

\end{proof}

We do not know of any 3-regular graphs in $\mathcal{G}_s^s$. More generally we believe that:

\begin{conjecture}
 If $G\in\mathcal{G}_s^s$ then $G$ is $2$-degenerate, i.e., every non-empty induced subgraph has a vertex of degree at most $2$.
\end{conjecture}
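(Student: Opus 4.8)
The plan is to reduce the conjecture to a purely local statement and then attack that statement with an argument that genuinely uses strict convexity of the midpoints. First I would use that each class $\mathcal{G}_i^j$ is closed under taking subgraphs, so every induced subgraph of a graph in $\mathcal{G}_s^s$ again lies in $\mathcal{G}_s^s$. Consequently the conjecture is \emph{equivalent} to the single statement: every $G\in\mathcal{G}_s^s$ has a vertex of degree at most $2$. Indeed, $2$-degeneracy asks exactly that every non-empty induced subgraph contain such a vertex, and each such subgraph is itself in $\mathcal{G}_s^s$. Note also that, in the terminology of Lemma~\ref{sees2}, a vertex $v$ has degree at most $2$ if and only if it has no interior edges; so the target becomes: \emph{some vertex has no interior edge}.

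Before describing the attack it is worth recording why this is strictly harder than Theorem~\ref{2n-2}. The edge bound $|E|\le 2n-3$ does not imply $2$-degeneracy: the $3$-regular prisms $P_k\in\mathcal{G}_s^w$ of Theorem~\ref{thm:3over2} satisfy $3k\le 2(2k)-3$ for $k\ge 3$ yet are not $2$-degenerate. Since Lemma~\ref{sees2} and Theorem~\ref{2n-2} already hold throughout $\mathcal{G}_s^w$, any proof of the conjecture must use \emph{strict} convexity of the midpoint set in an essential way; whatever argument one gives, it must fail for the prisms.

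The one sharpening available only in the strict setting is that \emph{every} one of the $|E|$ midpoints is then a vertex of the convex polygon $P_E$. Fixing such a drawing, for a vertex $v$ with neighbours $y_1,\dots,y_k$ in convex order the incident midpoints $m_i=\tfrac12(v+y_i)$ form a strictly convex chain $C_v$, a homothetic copy of the $y_i$ with centre $v$ and ratio $\tfrac12$, all of whose points are vertices of $P_E$. Lemma~\ref{sees2} places the interior midpoints $m_2,\dots,m_{k-1}$ strictly on the far side, from $v$, of the chord $\overline{m_1m_k}$ joining the two exterior midpoints. I would then try to locate a vertex with $k\le 2$ by an extremal/discharging argument around $\partial P_E$: walking through the $|E|$ midpoint-vertices in convex position and charging each turn of the polygon to the incidence structure of the underlying edges, with the aim of showing that an interior edge at \emph{every} vertex over-counts the total turning $2\pi$ of $\partial P_E$. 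An alternative is to strengthen the ``doubly exterior'' count of Theorem~\ref{2n-2}: there at least three edges are exterior to both endpoints, and one would hope that strictness upgrades this to a vertex both of whose extreme incident midpoints are consecutive on $\partial P_E$, which would force that vertex to have no interior edges.

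The step I expect to be the genuine obstacle -- and the reason the statement remains only a conjecture -- is the passage from the \emph{global} hypothesis that all midpoints are in strictly convex position to a \emph{local} conclusion at one vertex. By the reduction above there is no difficulty with induction: deleting a vertex leaves both the remaining vertices and the remaining midpoints as subsets of strictly convex sets, so $\mathcal{G}_s^s$-membership is preserved by the very same drawing, and the whole content sits in the bare existence of a degree-$\le 2$ vertex. Strict convexity is easy to exploit at a single vertex in isolation -- this is exactly how $K_4-e\notin\mathcal{G}_s^s$ is shown -- but I do not know how to exploit it simultaneously at all vertices. Since the only difference from $\mathcal{G}_s^w$, where $3$-regular graphs exist, is that the chains $C_v$ may no longer degenerate to line segments, a successful proof must quantify how much each $C_v$ is forced to bend and aggregate these local deficits into a contradiction; producing such a monovariant, or the corresponding discharging scheme on $\partial P_E$, is the crux that is still open.
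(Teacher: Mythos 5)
The statement you were asked to prove is not a theorem of the paper at all: it appears there as an open \emph{conjecture}, with no proof given, and the paper's conclusion explicitly reiterates that confirming it remains open. So there is no paper proof to measure you against, and, as you yourself say in the final paragraph, your proposal does not close the statement either. What you have is a correct and useful reduction, not a proof: since every $\mathcal{G}_i^j$ is closed under taking subgraphs, the conjecture is indeed equivalent to the single claim that every non-empty $G\in\mathcal{G}_s^s$ has a vertex of degree at most $2$, and your observation that any proof must use \emph{strict} convexity of the midpoints in an essential way (because the $3$-regular prisms $P_k$ lie in $\mathcal{G}_s^w$ by Theorem~\ref{thm:3over2}, so Lemma~\ref{sees2} and the $2n-3$ bound alone cannot suffice) is exactly the right sanity check on any candidate argument.

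The genuine gap is the step you flag yourself: passing from global strict convexity of $P_E$ to the existence of one vertex with no interior edge. The two attack routes you sketch are both unsubstantiated as stated. The discharging idea around $\partial P_E$ needs a concrete charge assignment and a proof that interior edges at every vertex over-count the total turning $2\pi$; nothing in the paper's machinery (the chains $C_v$, the doubly exterior edges of Theorem~\ref{2n-2}) currently yields such a quantitative ``bending deficit,'' and it is not even clear the deficit is bounded away from zero as drawings degenerate toward weak convexity. The proposed strengthening of the ``three doubly exterior edges'' count to ``a vertex whose two extreme incident midpoints are consecutive on $\partial P_E$'' is also unproven, and consecutiveness on $\partial P_E$ would anyway not immediately force that vertex to have degree $2$ --- you would still need to rule out interior edges at it, which is again the same missing local-from-global step. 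In short: your reduction and obstruction analysis are sound, but the core of the conjecture remains exactly as open in your write-up as it is in the paper.
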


\subsection{Structural questions}

One can see, although it is tedious, that adding a leaf at the vertex $r_1$ of $L_8$ (see
Definition~\ref{def:Ln}) produces a graph not in $\mathcal{G}_s^w$.
Under some conditions it is possible to add leafs to graphs in
$\mathcal{G}_s^s$. We say that an edge is \emph{$V$-crossing} if it
intersects the interior of~$P_V$.

\begin{proposition}\label{lem:addleaf}
 Let $G\in\mathcal{G}_s^s$ be drawn in the required way. If $uv$ is not $V$-crossing, then attaching a new vertex $w$ to $v$ yields a graph in $\mathcal{G}_s^s$.
\end{proposition}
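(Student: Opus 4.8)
The plan is to keep the given drawing of $G$ untouched and simply place the new vertex $w$ just outside $P_V$, across the edge $uv$ and close to $u$, so that $w$ becomes a new extreme vertex and the new midpoint $m_{vw}$ becomes a new extreme point of the midpoint set. Since the classes in question are invariant under affine transformations (affine maps preserve convex position and commute with taking midpoints), I would first normalize by an affine map so that $v=(0,0)$, $u=(-1,0)$ and $P_V\subseteq\{y\le 0\}$. This is where the hypothesis is used: $uv$ not being $V$-crossing means $u$ and $v$ are consecutive vertices of $P_V$, so $uv$ is a boundary edge and the line $\ell=\{y=0\}$ supports $P_V$, meeting it exactly along $uv$.

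The key fact I would record next concerns the midpoint $m_{uv}=(-\tfrac12,0)$. Because $\ell$ supports $P_V$ and $P_E\subseteq P_V$, the line $\ell$ also supports $P_E$; moreover the only edge of $G$ with both endpoints on $\ell$ is $uv$ itself, so $m_{uv}$ is the unique point of $P_E$ lying on $\ell$, i.e.\ the unique topmost vertex of $P_E$, while every other midpoint has strictly negative $y$-coordinate.

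Now I would place $w=(-1+2s,\,2\epsilon)$ with $s>0$ small and $0<\epsilon\ll s$, so that $m_{vw}=\tfrac12 w=(-\tfrac12+s,\,\epsilon)$ sits just to the right of $m_{uv}$ and barely above $\ell$. Two convexity checks then remain. First, $w$ lies in the (bounded, triangular) region from which only the edge $uv$ of $P_V$ is visible, that is, beyond $uv$ but not beyond either edge of $P_V$ adjacent to it; adding such a point merely subdivides $uv$ into $uw$ and $wv$ and keeps every old vertex extreme, so $V\cup\{w\}$ stays strictly convex. Second, since $m_{uv}$ is the topmost vertex of $P_E$ and $m_{vw}$ lies just above $\ell$ slightly to its right, for $\epsilon$ small compared to $s$ the point $m_{vw}$ sees exactly one edge of $P_E$, namely one of the two meeting at $m_{uv}$; hence adding $m_{vw}$ subdivides that single edge and leaves all old midpoints extreme, so the midpoint set stays strictly convex. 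Finally a genericity step finishes the argument: the admissible pairs $(s,\epsilon)$ form a nonempty open set, so one may in addition avoid the finitely many positions at which $w$ coincides with a vertex or lies on a line through two vertices, or at which $m_{vw}$ coincides with an existing midpoint or lies on a line through two midpoints, guaranteeing a genuine drawing.

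The main obstacle is the second convexity check, i.e.\ making sure $m_{vw}$ does not push any old midpoint into the interior of the enlarged hull. This is exactly where the hypothesis is essential: $m_{vw}$ is forced to have positive $y$-coordinate (it is half of a point above $\ell$), so it can only be controlled if there is a vertex of $P_E$ sitting on $\ell$ against which to dock it, and the boundary edge $uv$ is precisely what supplies such a vertex, namely $m_{uv}$. Placing $w$ near $u$ rather than elsewhere on $uv$ keeps $m_{vw}$ near $m_{uv}$, which is what makes ``only one edge visible'' hold; the quantitative heart of this is the relation $\epsilon\ll s$, and checking that $m_{vw}$ is beyond exactly the rightward edge at $m_{uv}$ while staying inside the other is the one routine computation I would carry out in full.
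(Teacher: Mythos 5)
Your proposal is correct and takes essentially the same route as the paper: the paper also exploits that $m_{uv}$ is the unique point of $P_E$ on the line through $uv$ (which supports both $P_V$ and $P_E$), and it places the new midpoint just outside $P_E$ next to $m_{uv}$, tilted toward $v$ — described there synthetically via supporting lines of $P_E$ rather than by your explicit coordinates with $\epsilon\ll s$ — which forces $w$ to land just outside $P_V$ near $u$. If anything, your explicit check that $w$ stays on the inner side of the edges of $P_V$ adjacent to $uv$ is a detail the paper's own proof leaves implicit.
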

\begin{proof}
Let $G\in\mathcal{G}_s^s$ with at least $3$ vertices and let $e=uv$
be the edge of $G$ from the statement. For convenience consider that
$uv$ come in clockwise order on the boundary of $P_V$. Consider the
supporting line $H$ of $P_E$ through the midpoint $m_e$ of $e$,
whose side containing $P_E$ contains $v$. A new midpoint can go
inside the triangle $\Delta$ defined by $H$, the two clockwisely consecutive
supporting lines of $P_E$, both intersecting in a midpoint $m'$.
Since $P_E$ is contained in $P_V$ a part of $\Delta$ lies outside
$P_V$. Choosing the midpoint of a new edge attached to $v$ inside
this region very close to $e$ preserves strict convexity of vertices
and midpoints. See Figure~\ref{fig:addleaf} for an illustration.

 \begin{figure}[htb]
  \centering
  \includegraphics{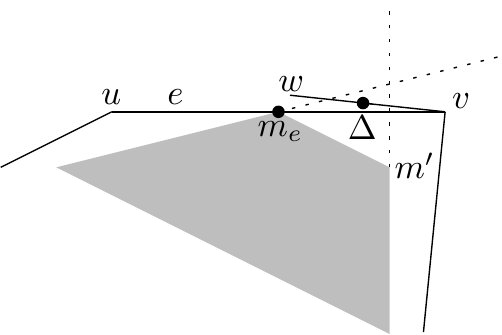}
  \caption{The construction in Proposition~\ref{lem:addleaf}}
  \label{fig:addleaf}
 \end{figure}
\end{proof}

We pose the following 
\begin{question}
Is the class $\mathcal{G}_s^s$ is closed under adding leafs?
\end{question}

Despite the fact that $K_{2,n} \notin \mathcal G_{s}^s$ for all $n \geq 3$, we have
found in Theorem~\ref{thm:3/2n-2lb} a subdivision of $K_{2,n}$ which
belongs to $\mathcal G_s^s$. Similarly, Theorem~\ref{thm:3over4} gives that a
subdivision of~$K_{3,3}$ is in $\mathcal G_s^s$, while $K_{3,3}$ is not. We
have the impression that subdividing edges facilitates drawings in
$\mathcal{G}_s^s$. Even more, we believe that:

\begin{conjecture}
 The edges of every graph can be (multiply) subdivided such that the resulting graph is in $\mathcal{G}_s^s$.
\end{conjecture}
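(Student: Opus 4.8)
The plan is to build the required drawing \emph{incrementally}, adding the original edges one at a time, each as a long and finely subdivided convex arc, in the spirit of the constructions for $B_n$ (Theorem~\ref{thm:3/2n-2lb}) and $H_k$ (Theorem~\ref{thm:3over4}) and of the leaf-addition argument of Proposition~\ref{lem:addleaf}. First I would fix a cyclic order in which all vertices, original and to-be-created, will eventually appear on the convex hull, and place the $n$ original vertices of $G$ as points in strictly convex position on a large convex curve, spaced so generously that wide gaps remain between consecutive hull vertices. The invariant maintained throughout is that the subdivided graph drawn so far lies in $\mathcal{G}_s^s$: every vertex placed so far is strictly convex and every edge-midpoint placed so far is strictly convex. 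At the end a global perturbation, decreasing selected $y$-coordinates by quantities $2^i\epsilon$ exactly as in Theorem~\ref{thm:3/2n-2lb}, would remove any incidental collinearities among midpoints.

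To process a new edge $e=xy$ I would route it as a convex polygonal path $x=w_0,w_1,\ldots,w_t=y$ whose interior vertices are placed in a thin sliver just outside the current vertex hull $P_V$, hugging its boundary from $x$ to $y$. For $t$ large and the sliver thin, appending such points one per boundary gap keeps the whole vertex set strictly convex by a routine convexity argument. The favorable feature is the behavior of the midpoints $m_i=\tfrac12(w_{i-1}+w_i)$: since the $w_i$ trace a convex chain, the $m_i$ again lie in strictly convex position (the midpoints of a convex chain through points of a parabola, say, lie on a parallel parabola), and by refining the subdivision this midpoint arc can be confined to an arbitrarily thin neighbourhood of a scaled copy of $\partial P_V$, hence kept away from the interior of the current midpoint hull $P_E$ along most of its length. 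The freedom to subdivide \emph{multiply} is precisely what supplies the many degrees of freedom needed to place these interior vertices and their midpoints simultaneously.

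The heart of the argument, and the step I expect to be the main obstacle, is reconciling this routing with the requirement that \emph{all} original vertices remain exposed on the global hull. If $x$ and $y$ are far apart along $\partial P_V$, a single outward bulge would bury the original vertices lying between them, destroying their strict convexity; yet a path cannot weave in and out past those vertices and still be convex. Thus one cannot treat the cyclic order as fixed once and for all: the order in which the original vertices appear on the final hull must be chosen in concert with the routing, so that each subdivided edge connects vertices that are consecutive enough for a thin outward arc to avoid covering any other vertex. Compounding this, near each endpoint the first and last midpoints $\tfrac12(x+w_1)$ and $\tfrac12(w_{t-1}+y)$ are tied to the \emph{fixed} deep position of $x$ and $y$ and can fall inside $P_E$, so a two-scale routing is needed — a few very short steps at each end to push these boundary midpoints just outside $P_E$, and a long middle portion hugging $\partial P_V$. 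Proving that such an order and routing always coexist, for edges with arbitrarily separated endpoints and in the presence of all previously routed arcs, is the crux; once a single edge can always be added under these constraints, induction on the number of original edges completes the proof.
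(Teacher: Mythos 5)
This statement is not proved in the paper at all: it is posed as a \emph{conjecture}, and the authors explicitly present it as open. So the only question is whether your argument stands on its own as a proof, and it does not. You yourself flag the decisive step as unresolved (``Proving that such an order and routing always coexist \ldots is the crux''), which means the proposal is a strategy sketch, not a proof; everything that is genuinely hard about the conjecture is deferred to exactly that step. Concretely, the difficulty is not a technicality one can expect to push through by the stated induction. Strict convexity of the vertex set forces \emph{all} vertices, original and subdivision, onto a single convex curve, so an original vertex lying ``between'' $x$ and $y$ in the cyclic order is buried by any outward arc from $x$ to $y$ that hugs the hull. Your remedy is to choose the cyclic order of the original vertices so that every edge joins vertices that are ``consecutive enough,'' but for dense graphs this is impossible: in $K_n$ every pair is an edge, while in any cyclic order of $n$ points most of the $\binom{n}{2}$ pairs are far apart in that order. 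So the routing scheme, as described, cannot even begin for such graphs, and the induction has no base mechanism to fall back on.

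There are two further unaddressed issues. First, your convexity argument is local to each arc: the midpoints of one subdivided edge may well lie on a convex curve, but the conjecture requires the midpoints of \emph{all} edges of the subdivided graph to be \emph{jointly} strictly convex, and nesting many thin arcs in slivers outside $P_V$ makes their midpoint chains interleave; nothing in the proposal controls this interaction. Second, your framing contains a misconception that both creates and obscures the obstruction: the subdivided edge need not be a geometrically convex path at all --- only its vertices (together with all others) and the global set of midpoints must be convex --- so the path may legitimately ``weave''; but then the midpoints of long weaving chords fall deep inside $P_E$, which is the actual enemy. A correct proof would have to resolve this tension quantitatively, and nothing in the proposal does so. The conjecture remains open.
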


\section{Minkowski sums}
We show that the largest cardinality of a weakly
convex set $X$, which is a subset of the Minkowski sum
of a convex planar $n$-point set $A$ with itself is $2n$. If $X$ is required to be strictly convex, then the largest 
size of such a set lies between $\frac{3}{2}n$ and $2n-2$.

As mentioned in the introduction there is a slight trade-off when translating the graph drawing problem to the Minkowski sum problem. Since earlier works have been considering only asymptotic bounds this was neglected. Here we are fighting for constants, so we deal with it. Recall that a point $x\in X\subseteq A+A$ is not captured by the graph model if $x=a+a$ for some $a\in A$. Indeed, the point $x$ corresponds to a vertex in the drawing of the graph. In order to capture the trade-off, for every $i,j \in \{s,w,a\}$, we
define $\widetilde{g}_i^j(n)$ as the maximum value of $n'+m$, where $m$ is the number of edges of an $n$-vertex graph in $\mathcal{G}_i^j$ and $n'$ of its vertices can be added to the set of midpoints in such a way that the resulting set is $\begin{cases} \mbox{strictly convex} &\mbox{if } j=s \\
\mbox{weakly convex} & \mbox{if } j=w \\
\mbox{arbitrary} & \mbox{if } j=a \end{cases}$.

\medskip

 We recall that a \emph{vertex~$v$ sees an edge $e$} if the
straight-line segment connecting $v$ and the midpoint $m_e$ of~$e$
does not intersect the interior of~$P_E$.

\begin{lemma}\label{lem:addvertex}
 Let $G\in \mathcal{G}_s^w$ be drawn in the required way and $v\in G$. If $v$ can be added to the drawing of $G$ such that $v$ together with the midpoints of $G$ is weakly convex, then every edge $vw\in G$ is seen by $w$.
\end{lemma}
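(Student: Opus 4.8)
The plan is to exploit that the midpoint $m_{vw}$ of the edge $vw$ is the exact midpoint of the segment joining $v$ and $w$, so that $v$ and $w$ are reflections of one another through $m_{vw}$. Write $M$ for the set of midpoints of $G$ and set $P' := \mathrm{conv}(\{v\} \cup M)$; the hypothesis is precisely that $\{v\} \cup M$ is weakly convex, i.e.\ every one of its points lies on the boundary of $P'$. I would begin by recording two elementary boundary facts: first, since $m_{vw}$ is a midpoint of $G$ it lies in $M$, so by weak convexity $m_{vw} \in \partial P'$; second, $v \in P'$ trivially.

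The key step is to pick a supporting line $\ell$ of $P'$ at the boundary point $m_{vw}$, with outer normal $n$ chosen so that $\langle n, x - m_{vw}\rangle \leq 0$ for every $x \in P'$. Because $v \in P'$ this yields $\langle n, v - m_{vw}\rangle \leq 0$. Now I would invoke the reflection identity: from $m_{vw} = (v+w)/2$ one gets $w - m_{vw} = -(v - m_{vw})$, and therefore $\langle n, w - m_{vw}\rangle = -\langle n, v - m_{vw}\rangle \geq 0$. Thus $w$ lies in the closed half-plane on the far side of $\ell$ from $P'$.

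Next I would observe that $\ell$ supports not only $P'$ but also $P_E = \mathrm{conv}(M)$: indeed $P_E \subseteq P'$ lies in the closed half-plane $\{x : \langle n, x - m_{vw}\rangle \leq 0\}$ and meets $\ell$ at the point $m_{vw} \in M$. Consequently the interior of $P_E$ lies strictly inside the open half-plane $\{x : \langle n, x - m_{vw}\rangle < 0\}$. Since both endpoints $m_{vw}$ and $w$ of the segment $[m_{vw}, w]$ satisfy $\langle n, \cdot - m_{vw}\rangle \geq 0$, by convexity so does every point of the segment, and hence $[m_{vw}, w]$ is disjoint from the interior of $P_E$. By definition this says exactly that $w$ sees the edge $vw$, which is the desired conclusion.

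The whole argument is essentially a single reflection computation combined with the standard fact that a supporting line of a larger convex set through a point of a smaller contained one supports the smaller set as well, so I expect no genuine obstacle beyond careful bookkeeping of half-plane orientations. The one configuration to check explicitly is the degenerate case where $v$, and hence also $w$, happens to lie on $\ell$ (so that the whole edge $vw$ lies along $\ell$); there one simply notes that the interior of $P_E$ still lies strictly off $\ell$, so the segment $[m_{vw}, w] \subseteq \ell$ again misses it and the conclusion persists.
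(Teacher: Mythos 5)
Your proof is correct, and it is a sound, fully rigorous formalization of the statement. It differs in mechanics from the paper's argument, which is a one-sentence proof by contradiction supported by a figure: there, one assumes $w$ does \emph{not} see $vw$, so the segment from $w$ to $m_{vw}$ crosses the interior of $P_E$, and one then observes that $m_{vw}$ would lie in the convex hull of $v$ together with portions of $P_E$ on either side of the line through $v$ and $w$ --- contradicting the weak convexity of $\{v\}\cup M$. Both arguments ultimately rest on the same geometric fact, namely that $v$ and $w$ are reflections of one another through $m_{vw}$, so weak convexity at $m_{vw}$ with $v$ ``inside'' forces $w$ ``outside.'' What your direct route buys is precision: the supporting line $\ell$ at $m_{vw}$, the inner-product bookkeeping $\langle n, w - m_{vw}\rangle = -\langle n, v - m_{vw}\rangle \geq 0$, and the observation that $\ell$ also supports $P_E \subseteq P'$ turn the picture into a verifiable computation, and you correctly dispose of the degenerate case where the edge lies along $\ell$ (the interior of $P_E$ is still strictly off $\ell$, so the segment misses it). What the paper's route buys is brevity and visual transparency, at the cost of leaving the hull-membership claim to the reader and the figure. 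One small remark: your step showing that interior points of $P_E$ satisfy the strict inequality implicitly uses the two-dimensional interior (a point of $\ell$ interior to $P_E$ would force $P_E$ to cross $\ell$); this matches the paper's definition of ``sees,'' so there is no gap, but it is the one place where the choice of interior versus relative interior matters.
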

\begin{proof}
 Otherwise the midpoint of $vw$ will be in the convex hull of $v$ together with parts of $P_E$ to the left and to the right of $vw$, see Figure~\ref{fig:addvertex}.
 \begin{figure}[htb]
  \centering
  \includegraphics{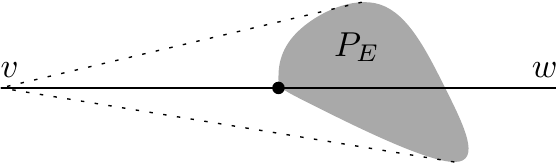}
  \caption{The contradiction in Lemma~\ref{lem:addvertex}}
  \label{fig:addvertex}
 \end{figure}
\end{proof}

We say that an edge is {\it good} if it
can be seen by both of its endpoints.

\begin{theorem}\label{thm:minkweak}
 For every $n\geq 3$ we have $\widetilde{g}_s^w(n)=2n$. That is, the largest
cardinality of a weakly convex set $X \subseteq A +
A$, for $A$ a convex set of $n$ points in the plane, is $2n$.
\end{theorem}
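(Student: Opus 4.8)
The plan is to prove the two inequalities $\widetilde{g}_s^w(n)\le 2n$ and $\widetilde{g}_s^w(n)\ge 2n$ separately. For the upper bound, fix $G\in\mathcal{G}_s^w$ drawn as required, with $n$ vertices and $m$ edges, and let $V''$ be a set of $n'$ vertices that can be added to the midpoints so that the union is weakly convex. First I would reuse the sight count underlying Theorem~\ref{2n-2}: setting $S=\sum_{v\in V}\#\{e : v\text{ sees }e\}$, Lemma~\ref{sees2} gives $S\le 2n$, while counting by edges gives $S=m+g$, where $g$ is the number of \emph{good} edges (those seen by both endpoints), since every edge is seen at least once and good edges are seen twice. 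Hence $m+g\le 2n$, and it remains to show $n'\le g$.

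The key step is therefore an injection from $V''$ into the set of good edges. After first deleting leaves, which only relaxes the target inequality (removing a leaf decreases $n$ by one and $m+n'$ by one or two), I may assume minimum degree at least two. For an added vertex $v$, Lemma~\ref{lem:addvertex} ensures that each incident edge $vw$ is seen by $w$, so by Lemma~\ref{sees2} it is exterior to $w$; since $v$ has exactly two exterior edges and these are its only candidates, both exterior edges of $v$ are exterior to both of their endpoints, i.e.\ good. Thus in the bipartite incidence graph $B$ between $V''$ and the good edges, every added vertex has degree exactly two. Since each good edge has at most two endpoints in $V''$, any $U\subseteq V''$ satisfies $|N_B(U)|\ge \tfrac{1}{2}\sum_{u\in U}\deg_B(u)=|U|$, so Hall's condition holds and a matching saturating $V''$ exists. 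This yields $n'\le g$ and hence $m+n'\le m+g\le 2n$. I expect the main obstacle to be exactly this translation of the addability condition of Lemma~\ref{lem:addvertex} into the statement that an added vertex is incident to two good edges, together with checking that good edges and doubly exterior edges coincide in $\mathcal{G}_s^w$.

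For the matching lower bound I would exhibit an explicit convex set. Take $A$ to be the vertex set of a strictly convex $n$-gon and draw $G=C_n$ on $A$; this lies in $\mathcal{G}_s^s\subseteq\mathcal{G}_s^w$, since the edge midpoints form the convex midpoint polygon. Now add all $n$ vertices of $A$ to the midpoint set. In Minkowski terms $X=\{2a : a\in A\}\cup\{a+a' : aa'\in C_n\}$, and since each $a+a'$ is the midpoint of the hull edge $[2a,2a']$ of $2\,\mathrm{conv}(A)$, every point of $X$ lies on the boundary of $2\,\mathrm{conv}(A)$. Hence $X$ is weakly convex with $|X|=n+n=2n$, giving $\widetilde{g}_s^w(n)\ge 2n$ and completing the proof.
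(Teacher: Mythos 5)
Your lower bound is correct and is exactly the paper's construction (a convex drawing of $C_n$ with all $n$ vertices added to the midpoints). The first half of your upper bound is also sound, modulo one slip: your count $S$ must range over \emph{incident} pairs only, since Lemma~\ref{sees2} bounds the number of incident edges a vertex sees, not the number of edges it sees overall. With that reading, $S\le 2n$ and $S=m+g$ (where $g$ counts edges seen by both endpoints) is precisely the paper's identity $m=n_1+2n_2-m_g$ combined with $n_1+2n_2\le 2n$. Your leaf-deletion reduction is also valid, but it will not save the step below, which fails for $2$-regular graphs.

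The genuine gap is the claim that every added vertex is incident to two good edges, i.e.\ your step $n'\le g$. Lemma~\ref{lem:addvertex} gives that each edge $vw$ at an added vertex $v$ is seen by $w$; for $vw$ to be \emph{good} you additionally need $v$ itself to see $vw$, and nothing forces this: ``exterior to $v$'' does not imply ``seen by $v$''. Your own closing remark (that good and doubly exterior edges should coincide) is exactly what is false. Concretely, draw $C_4$ with $v=(0,10)$, $w=(0,-10)$, $y=(4,12)$, $u=(-2,-1)$ and edges $vw,wy,yu,uv$. The vertices are strictly convex, and the midpoints $(0,0),(2,1),(1,5.5),(-1,4.5)$ form a nondegenerate parallelogram, so this drawing even lies in $\mathcal{G}_s^s$. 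Both $v$ and $u$ are addable, simultaneously: the six points $v,u$ and the four midpoints all lie on the boundary of the triangle with corners $u=(-2,-1)$, $(2,1)$, $v=(0,10)$ (the three other midpoints are the midpoints of its sides), so the union is weakly convex. Yet $v$ does not see $vw$: the segment from $(0,10)$ to $(0,0)$ meets the interior of the parallelogram (e.g.\ $(0,0.5)$ is interior), even though $vw$ is exterior to both endpoints and seen by $w$. Likewise $u$ does not see $yu$. Hence the only good edges are $uv$ and $wy$, both added vertices have degree one in your bipartite graph $B$, and their single neighbour is the \emph{same} edge $uv$; Hall's condition fails for $U=\{v,u\}$ and no saturating matching exists. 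So both your key claim and the matching conclusion break down. The paper's proof is built to avoid exactly this: it never asserts that an added vertex sees two incident edges; it counts only the edges an added vertex actually sees (those are good by Lemma~\ref{lem:addvertex}), obtaining $m_g\ge\frac{1}{2}(n'_1+2n'_2)$, and it absorbs added vertices seeing $0$ or $1$ incident edges through the inequalities $n'_0\le n_0$, $n'_1\le n_1$ and the slack in $n_0+\frac{3}{2}n_1+2n_2\le 2n$. If the inequality $n'\le g$ is true at all, it needs an argument of a different kind; as written, your proof of it is not repairable.
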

\begin{proof}
The lower bound comes from drawing $C_n$ as the vertices and edges
of a convex polygon. The set of vertices and midpoints is 
weakly convex.

For the upper bound let $G\in \mathcal{G}_s^w$ with $n$ vertices and
$m$ edges, we denote by~$n_i$ the number of vertices of $G$ that see
$i$ of its incident edges for $i\in\{0,1,2\}$. Since every edge is
seen by at least one of its endpoints and every vertex sees at most
$2$ of its incident edges (Lemma~\ref{sees2}), we know that $m = n_1
+ 2n_2 - m_g$, where $m_g$ is the number of good edges.

Let $n'$ be the number of vertices of $G$ that can be added to the
drawing such that together with the midpoints they are in weakly
convex position. Denote by~$n'_i$ the number of these vertices that
 see $i$ of its incident edges for $i\in\{0,1,2\}$. By Lemma~\ref{lem:addvertex} the edges seen by an
added vertex have to be good. Thus,
$m_g\geq\frac{1}{2}(n'_1+2n'_2)$. This yields
$$m+n'\leq n_1+2n_2-\frac{1}{2}(n'_1+2n'_2)+n'_0+n'_1+n'_2\leq
n_0+\frac{3}{2}n_1+2n_2\leq 2n.$$

\end{proof}

\begin{theorem}\label{thm:minkstrong}
 For every $n\geq 3$ we have $\lfloor\frac{3}{2}n\rfloor\leq \widetilde{g}_s^s(n)\leq 2n-2$. That is, the largest
cardinality of a convex set $X \subseteq A + A$, for
$A$ a convex set of $n$ points in the plane, lies within the above bounds.
\end{theorem}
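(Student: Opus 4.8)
The two bounds are largely independent, and for both I would lean on the framework already set up for the weak case in Theorem~\ref{thm:minkweak}. Throughout, recall that in the Minkowski picture the quantity $n'$ counts the doubled points $2a$ that we keep in $X$, equivalently the vertices of the drawing that can be adjoined to the set $P_E$ of edge-midpoints while preserving strict convexity.

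For the lower bound I would start from the graph $B_n$ of Theorem~\ref{thm:3/2n-2lb}, which for odd $n$ lies in $\mathcal{G}_s^s$ and has $\frac{3}{2}(n-1)$ edges with strictly convex midpoints. The key geometric observation is that a single point can be added to a strictly convex polygon $P_E$ without destroying strict convexity exactly when it lies in the open region beyond a single edge of $P_E$, so that no midpoint is absorbed into the interior and no three points become collinear. I would check that the explicit drawing of $B_n$ has at least one vertex in such a region, which yields $n'\ge 1$ and hence $m+n'\ge\frac{3}{2}(n-1)+1=\lfloor\frac{3}{2}n\rfloor$. For even $n$ I would take $B_{n+1}$, delete a degree-$2$ vertex to obtain an $n$-vertex graph with $\frac{3}{2}n-2$ edges, and adjoin two such vertices, again reaching $\lfloor\frac{3}{2}n\rfloor$.

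For the upper bound I would re-run the count of Theorem~\ref{thm:minkweak} verbatim. With $n_i$ and $n_i'$ the numbers of vertices and of added vertices seeing $i$ of their incident edges and $m_g$ the number of good edges, Lemma~\ref{sees2} gives $m=n_1+2n_2-m_g$ and Lemma~\ref{lem:addvertex} gives $m_g\ge n_2'+\tfrac12 n_1'$. A short rearrangement shows that the target inequality $m+n'\le 2n-2$ is equivalent to
$$n'\le 2n_0+n_1+m_g-2.$$
Since $n'=n_0'+n_1'+n_2'$ with $n_i'\le n_i$, this holds immediately whenever $2n_0+n_1\ge 4$. All the work is thus concentrated in the near-extremal configurations with $2n_0+n_1\le 3$, above all the case $n_0=n_1=0$ in which every vertex sees exactly two incident edges.

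Here I would invoke strict convexity, which was not available in the weak case and is exactly what separates $2n-2$ from $2n$. In the case $n_0=n_1=0$, saturating the estimate would force every vertex to be added and every edge to be good, so that the sightings identity $m+m_g=2n$ makes $G$ a $2$-regular graph. But each added vertex is a genuine point $a\in A$ on $\partial P_V$, whereas the midpoints lie strictly inside $P_V$; for any edge $aa'$ with both endpoints added, the points $a,\tfrac12(a+a'),a'$ are collinear, contradicting strict convexity of the combined set. The plan is to upgrade this qualitative obstruction into a deficit of exactly two units---either two good edges charged to no added vertex, or two vertices that cannot be added---thereby establishing the displayed inequality in the remaining small cases. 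Making this deficit uniformly equal to $2$ across all configurations with $2n_0+n_1\le 3$ is the part I expect to be the main obstacle; I anticipate a brief case analysis driven by the at-least-three doubly-exterior edges furnished by the proof of Theorem~\ref{2n-2}.
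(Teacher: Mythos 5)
Your proposal has genuine gaps in both directions, and in each case the missing piece is exactly where the paper's proof does its work.

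For the upper bound, your counting only uses the weak-case inequality $m_g\geq \tfrac12 n'_1+n'_2$, and you invoke strict convexity merely as a qualitative obstruction to be exploited later. The paper's key move is to use it quantitatively at the counting stage: since two adjacent vertices cannot both be added (the collinearity of $a,\tfrac12(a+a'),a'$ you noted), the addable vertices form an independent set, so each good edge is charged to at most one added endpoint and $m_g\geq n'_1+2n'_2$. Plugging this in gives directly $m+n'\leq n_1+2n_2+n'_0-n'_2\leq n+n_2-n'_2$, and the only configurations where this exceeds $2n-2$ force either $n_2=n,\ n'_2\leq 1$ or $n_2=n-1,\ n'_2=0$; in both cases $n'\leq 1$, and then Theorem~\ref{2n-2} ($m\leq 2n-3$) finishes: $m+n'\leq 2n-2$. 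Your weaker inequality leaves the whole family of cases $2n_0+n_1\leq 3$ open, and you explicitly concede that making the ``deficit of two'' uniform there is unresolved; your proposed tool (the three doubly exterior edges from the proof of Theorem~\ref{2n-2}) is not developed into an argument. So the upper bound is not proved as written, although the skeleton is close to the paper's and the repair (use independence in the count, then $n'\leq 1$ plus $m\leq 2n-3$) is short.

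For the lower bound, your plan rests on the unverified claim that in the drawing of $B_n$ from Theorem~\ref{thm:3/2n-2lb} one vertex (odd $n$), respectively two vertices simultaneously (even $n$), can be adjoined to the midpoint set while keeping strict convexity. This is not routine: in that drawing the midpoints of the diagonal edges together with $m_{uw},m_{vw}$ form a nearly flat chain, and the left (resp.\ right) vertices are nearly collinear with the left (resp.\ right) edge midpoints, so addability of any particular vertex requires a careful check, and adding two vertices at once is harder still since the admissible regions interact. The paper sidesteps $B_n$ entirely with a one-line construction: draw $C_n$ as a convex polygon; then all $n$ edge midpoints together with an independent set of $\lfloor n/2\rfloor$ vertices form a strictly convex set, giving $\widetilde{g}_s^s(n)\geq n+\lfloor n/2\rfloor=\lfloor\tfrac32 n\rfloor$ for all $n$ at once. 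Note this also shows why your even-$n$ workaround is unnecessary.
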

\begin{proof}
 The lower bound comes from drawing $C_n$ as the vertices and edges of a convex polygon.
The set formed by an independent set of vertices and all
midpoints is in convex position.

Take $G\in \mathcal{G}_s^s$ with $n$ vertices and $m$ edges. The
upper bound is very similar to Theorem~\ref{thm:minkweak}. Indeed,
following the same notations we also get that $m = n_1 + 2n_2 -
m_g$. Again, the edges seen by an added vertex have to be good.
Since now moreover the set of addable vertices has to be
independent, we have $m_g\geq n'_1+2n'_2$. This yields $$m+n'\leq
n_1+2n_2-n'_1-2n'_2+n'_0+n'_1+n'_2\leq n+n_2-n'_2.$$

If $n+n_2-n'_2>2n-2$ then either $n_2 = n$ and $n_2' < 2$, or $n_2 =
n-1$ and~$n_2' = 0$. In both cases we get that $n' \leq 1$. By
Theorem~\ref{2n-2} we have $m\leq 2n-3$, then it follows that
$m+n'\leq 2n-2$.

\end{proof}

\section{Conclusions}
We have improved the known bounds on $g_s^s(n)$, the number of edges
an $n$-vertex graph of strong convex dimension $2$ can have. Still
describing this function exactly is an open problem. We believe that graphs in $\mathcal G_s^s$ have 
degeneracy $2$. However, confirming our conjecture
would not improve our bounds. Similarly, the exact largest cardinality
$\widetilde{g}_s^s(n)$ of a convex set $X \subseteq A
+ A$ for $A$ a convex planar $n$-point set, remains to be
determined. Curiously, in both cases we have shown that the correct
answer lies between $\frac{3}{2}n$ and $2n$. The more general family
$\mathcal{G}_s^w$ seems to be easier to handle, in particular
we have provided the exact value for both $g_s^w$ and
$\widetilde{g}_s^w$.

From a more structural point of view we wonder what graph
theoretical measures can ensure that a graph belongs to $\mathcal{G}_s^s$
or $\mathcal{G}_s^w$. None of these classes is contained in the class of planar graphs.
The class $\mathcal{G}_s^w$ is not closed
under adding leafs. We do not know if the same holds for
$\mathcal{G}_s^s$. Finally, we believe that subdividing a graph often 
enough ensures that it can be drawn in $\mathcal{G}_s^s$.

\bibliography{refs}
\bibliographystyle{amsplain}

\end{document}